\documentclass[10pt]{article}

\title{Random embeddings of bounded degree trees with optimal spread}
\author{Paul Bastide \and Clément Legrand-Duchesne \and Alp Müyesser}

\usepackage{graphicx, caption}
\usepackage{amssymb}
\usepackage{amsthm}
\usepackage{amsmath}
\usepackage{enumitem}
\usepackage{framed} 
\usepackage{dsfont}

\usepackage{soul}
\usepackage{changepage}
\usepackage{comment}
\usepackage{float}
\usepackage[dvipsnames, table]{xcolor}
\definecolor{darkblue}{rgb}{0,0,0.5}
\usepackage[
    colorlinks=true,
    linkcolor=darkblue,
    citecolor=darkblue]{hyperref}
\usepackage[capitalise]{cleveref}
\usepackage{marginnote}
\usepackage{lscape}

\usepackage{tikz}
\usetikzlibrary{calc}
\usetikzlibrary{shapes}
\usetikzlibrary{snakes}
\usetikzlibrary{decorations.pathreplacing,calligraphy}

\usepackage{mathtools}

\usepackage[ruled,vlined]{algorithm2e}

\SetCommentSty{mycommfont}

\theoremstyle{plain}
\newtheorem{theorem}{Theorem}[section]
\newtheorem{lemma}[theorem]{Lemma}

\newtheorem{proposition}[theorem]{Proposition}
\newtheorem{claim}{Claim}[theorem]

\newtheorem{corollary}[theorem]{Corollary}
\theoremstyle{definition}
\newtheorem{definition}[theorem]{Definition}

\newtheorem{remark}{Remark}

\usepackage{mdframed}
\usepackage{lipsum}

\newmdtheoremenv{definbox}[theorem]{Definition}

\newcommand\eps{\varepsilon}
\newcommand\N{\mathbb{N}}
\newcommand\E{\mathbb{E}}
\renewcommand\P{\mathbb{P}}

\newcommand{\cG}{\ensuremath{\mathcal G}}

\newcommand{\Prob}[1]{\ensuremath{%
    \mathbb P\left[#1\right]
  }}
  
\newcommand{\Expect}[1]{\ensuremath{%
    \mathbb E\left[#1\right]
  }}

\newcommand{\cF}{\mathcal{F}}

\renewcommand{\ge}{\geqslant}
\renewcommand{\le}{\leqslant}
\renewcommand{\geq}{\geqslant}
\renewcommand{\leq}{\leqslant}
\DeclareMathOperator{\Ima}{Im}

\newcommand{\doublesquig}{%
  \mathrel{%
    \vcenter{\offinterlineskip
      \ialign{##\cr$\rightsquigarrow$\cr\noalign{\kern-1.5pt}$\rightsquigarrow$\cr}%
    }%
  }%
}

\DeclareMathOperator{\gnp}{\cG}
\DeclareMathOperator{\gknp}{\cG^{(\emph{k})}}

\newcommand{\aside}[1]{\marginnote{#1}}

\usepackage{graphicx}%

\addtolength{\textwidth}{2in}
\addtolength{\hoffset}{-1in}
\addtolength{\textheight}{1in}
\addtolength{\voffset}{-0.7in}
\setlength{\parskip}{6pt}
\setlength{\parindent}{0pt}


\newcommand{\PB}[1]{\textcolor{ForestGreen}{ $\blacktriangleright$\ {\sf PB: #1}\
  $\blacktriangleleft$ }}


\linespread{1.00}

\AtBeginDocument{%
   \def\MR#1{}
} 

\begin{document}
\maketitle
\begin{abstract}
    A seminal result of Koml\'os, S\'ark\"ozy, and Szemer\'edi states that any $n$-vertex graph $G$ with minimum degree at least $(1/2+\alpha)n$ contains every $n$-vertex tree $T$ of bounded degree. Recently, Pham, Sah, Sawhney, and Simkin extended this result to show that such graphs $G$ in fact support an \textit{optimally spread distribution} on copies of a given $T$, which implies, using the recent breakthroughs on the Kahn-Kalai conjecture, the \textit{robustness} result that $T$ is a subgraph of sparse random subgraphs of $G$ as well.
    \par Pham, Sah, Sawhney, and Simkin construct their optimally spread distribution by following closely the original proof of the Koml\'os-S\'ark\"ozy-Szemer\'edi theorem which uses the \text{blow-up lemma} and the Szemer\'edi regularity lemma. We give an alternative, regularity-free construction that instead uses the Koml\'os-S\'ark\"ozy-Szemer\'edi theorem (which has a regularity-free proof due to Kathapurkar and Montgomery) as a black-box.
    \par Our proof is based on the simple and general insight that, if $G$ has linear minimum degree, almost all constant sized subgraphs of $G$ inherit the same minimum degree condition that $G$ has.   
\end{abstract}

\section{Introduction}

There is a large body of results in extremal graph theory focusing on determining the minimum degree threshold which forces the containment of a target subgraph. For example, a classical result of Dirac \cite{dirac1952} states that any $n$-vertex graph with minimum degree at least $n/2$ contains a Hamilton cycle. Although this result is tight, graphs with minimum degree $n/2$ are quite dense, so it is natural to suspect that they are Hamiltonian in a rich sense. In this direction, S\'arközy, Selkow, and Szemer\'edi \cite{SSS2003} showed that $n$-vertex graphs with minimum degree $n/2$ contain $\Omega(n)^n$ distinct Hamilton cycles (we refer to such results as \textit{enumeration} results, see also \cite{CK2009}). Moreover, randomly sparsifying the edge set of an $n$-vertex graph with minimum degree $n/2$ yields, with high probability, another Hamiltonian graph, as long as each edge is kept with probability $\Omega(\log n /n)$. This follows from an influential result of Krivelevich, Lee, and Sudakov \cite{krivelevich2014robust} (such results are referred to as \textit{robustness} results, see \cite{sudakov2017robustness}), which generalises P\'osa's celebrated result stating that the random graph $G(n,C\log n/n)$ is Hamiltonian with high probability. 
\par The study of random graphs was recently revolutionised by Frankston, Kahn, Narayanan, and Park's \cite{FKNP21} proof of the \textit{fractional expectation threshold vs.\ threshold} conjecture of Talagrand~\cite{Ta10} (see also \cite{park2022proof}  for a proof of the even stronger Kahn--Kalai conjecture~\cite{KK07}). In our context, these breakthroughs imply that the enumeration and robustness results stated in the first paragraph, which themselves are fairly general, admit a a further common generalisation. To state this generalisation, we need the language of \textit{spread distributions} which we will define momentarily. In a nutshell, the key idea is to show that a graph $G$, with minimum degree large enough to necessarily contain a copy of a target graph $H$, actually supports a \textit{random embedding} of $H$ that (roughly speaking) resembles a uniformly random function from $V(H)$ to $V(G)$. The formal definition we use is below.
\begin{definition}[\cite{PSSS22}]\label{def:maindef}
    Let $X$ and $Y$ be finite sets, and let $\mu$ be a probability distribution over injections $\varphi : X \rightarrow Y$.  
    For $q \in [0, 1]$, we say that $\mu$ is \textit{$q$-spread} if for every two sequences of distinct vertices $x_1, \dots, x_s \in X$ and $y_1, \dots, y_s \in Y$,
    \begin{equation*}           
    \mu\left(\left\{\varphi : \varphi(x_i)=y_i \text{ for all }i \in [s]\right\}\right) \leq q^s.
    \end{equation*}    
\end{definition} In our context, $X=V(H)$, $Y=V(G)$, $|X|=|Y|=n$, and $\mu$ is a probability distribution over embeddings of $H$ into $G$. The gold standard for us is constructing distributions $\mu$ that are $O(1/n)$-spread. Such distributions have the optimal spread (up to the value of the implied constant factor) that is also attained by a random injection from $V(H)$ to $V(G)$. We remark that Definition~\ref{def:maindef}, originally introduced in \cite{PSSS22}, is different to the usual definition of spreadness phrased in terms of edges, instead of vertices. However, for embedding spanning subgraphs, the above definition turns out to be more convenient (see \cite{kelly2023optimal, PSSS22} for more details).
\par The breakthroughs on the Kahn-Kalai conjecture have created a lot of incentive to show ``spread versions'' of Dirac-type results in graphs and hypergraphs, as such results directly imply enumeration and robustness results, thereby coalescing two streams of research which have, until now, been investigated independently. We refer the reader to the recent papers \cite{PSSS22, kelly2023optimal, kang2024perfect, joos2023robust,anastos2023robust} that obtain several results in this direction (see also \cite{allen2024robust}). Most of the aforementioned work focuses on constructing spread distributions for target graphs with rather simple structures, such as perfect matchings or Hamilton cycles. One notable exception is the result from \cite{PSSS22} for bounded degree trees. To introduce this result, we first cite the following classical result in extremal graph theory.
\begin{theorem}[Koml\'os--S\'ark\"ozy--Szemer\'edi~\cite{KSS95}]\label{thm:KSS95}\ For every $\Delta\in\mathbb N$ and $\alpha>0$,
there exists $n_0\in\mathbb N$ such that the following holds for all $n\ge n_0$.
If $G$ is an $n$-vertex graph with $\delta(G)\ge (1+\alpha)\frac{n}{2}$, 
then $G$ contains a copy of every $n$-vertex tree with maximum degree bounded by $\Delta$.
\end{theorem}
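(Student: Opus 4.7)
The plan is to prove Theorem~\ref{thm:KSS95} via the Szemer\'edi regularity lemma combined with the Koml\'os--S\'ark\"ozy--Szemer\'edi blow-up lemma. Fix constants $1/n_0 \ll \varepsilon \ll d \ll \alpha, 1/\Delta$. Applying the regularity lemma to $G$ yields an equitable partition $V(G) = V_0 \cup V_1 \cup \cdots \cup V_k$ with $|V_0| \le \varepsilon n$ and all remaining clusters of common size $m$, in which all but at most $\varepsilon \binom{k}{2}$ pairs $(V_i,V_j)$ are $\varepsilon$-regular. Define the \emph{reduced graph} $R$ on $[k]$ with $ij \in E(R)$ whenever $(V_i,V_j)$ is $\varepsilon$-regular of density at least $d$. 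A standard averaging argument transfers the minimum-degree hypothesis and gives $\delta(R) \ge (1+\alpha/2)\tfrac{k}{2}$.

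Since $\delta(R) > k/2$, Dirac's theorem produces a Hamilton cycle $C = V_{\pi(1)} V_{\pi(2)} \cdots V_{\pi(k)}$ in $R$. Discarding a small set of atypical vertices from each cluster along $C$ (those with abnormally low degree into the two neighbouring clusters) upgrades each consecutive pair to an $(\varepsilon',d/2)$-super-regular pair for some $\varepsilon' = O(\varepsilon)$. The discarded vertices together with $V_0$ form an exceptional set $X$ of size $O(\varepsilon n)$, which I will absorb at the end using leaves of $T$.

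I then decompose $T$: root it at an arbitrary vertex, process it in depth-first order, and cut it into $k$ connected subtrees $T_1, \ldots, T_k$ of sizes matching those of the clusters along $C$, arranged so that tree-edges between different pieces go only between $T_i$ and $T_{i+1}$ (indices mod $k$). This is possible because any $n$-vertex tree of maximum degree $\Delta$ contains either many long bare paths, which can be sliced at internal vertices to tune sizes, or $\Omega(n/\Delta)$ leaves whose attached branches have small size; a standard tree-decomposition lemma extracts exactly such a partition. The blow-up lemma applied to the super-regular Hamilton cycle of blown-up clusters then produces an embedding of $\bigcup_i T_i$ into $\bigcup_i V_{\pi(i)}$ that respects all inter-piece tree-edges. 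Finally, I reserve in advance a small set of leaves of $T$ and use the hypothesis $\delta(G) \ge (1+\alpha)n/2$ to insert each vertex of $X$ as the image of one such leaf, since every such leaf's parent has many common neighbours with $X$.

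The main obstacle is the tree decomposition step: we need to simultaneously match arbitrary cluster sizes, confine cross-edges to cyclically adjacent pairs, and leave enough flexibility at the interfaces for the blow-up lemma to plug in. Handling this requires a careful structural dichotomy between long bare paths (allowing fine-grained size balancing) and leafy regions (useful for absorbing the exceptional set $X$), and this balancing is precisely the technical heart of the original KSS argument.
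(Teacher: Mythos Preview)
The paper does not prove Theorem~\ref{thm:KSS95}; it is quoted as a classical result and invoked only as a black box (via the rooted strengthening, Theorem~\ref{thm:rootedtree}). The whole point of the paper is to deduce the spread version, Theorem~\ref{thm:mainthm}, \emph{from} Theorem~\ref{thm:KSS95} rather than to reprove it, so there is no argument here against which to compare your sketch. The paper merely remarks that the original proof in \cite{KSS95} uses regularity and the blow-up lemma, and that a regularity-free proof appears in \cite{kathapurkar2022spanning}.

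As an independent comment on your sketch: the tree-decomposition step contains a real gap. You assert that $T$ can be cut into $k$ connected subtrees $T_1,\dots,T_k$ of prescribed sizes with every cross-edge going between cyclically consecutive pieces $T_i,T_{i+1}$. This forces the quotient of the partition to be a subgraph of a cycle, which already fails for a complete $\Delta$-ary tree with $\Delta\ge 3$: any connected piece containing a branching vertex must border more than two other pieces. The actual KSS embedding does not run along a Hamilton cycle of $R$ in this naive sense; the allocation of subtrees to regular pairs is more delicate and is not captured by a single path/cycle ordering. Your bare-paths-versus-many-leaves dichotomy is indeed the right structural ingredient, but it does not yield the linear arrangement you claim, and the absorption of $X$ via reserved leaves fails outright when $T$ is, say, a Hamilton path with only two leaves.
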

Theorem~\ref{thm:KSS95} admits a spread version, as demonstrated in \cite{PSSS22}. 
\begin{theorem}[Pham, Sah, Sahwney, Simkin \cite{PSSS22}]\label{thm:mainthm} For every $\Delta\in\mathbb N$ and $\alpha>0$,
there exists $n_0, C\in\mathbb N$ such that the following holds for all $n\ge n_0$. If $G$ is an $n$-vertex graph with $\delta(G)\ge (1+\alpha)\frac{n}{2}$, and $T$ is a $n$-vertex tree with $\Delta(T)\leq \Delta$, there exists a $(C/n)$-spread distribution on embeddings of $T$ onto $G$.
\end{theorem}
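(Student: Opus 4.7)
The plan is to build the $(C/n)$-spread embedding $\phi : V(T) \to V(G)$ by partitioning $V(G)$ into constant-sized random blocks and invoking Theorem~\ref{thm:KSS95} inside each block, with extra randomness added within each block to obtain spread. Fix a large integer $k = k(\alpha, \Delta)$ and set $m = n/k$.

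First, I would decompose $T$ into rooted subtrees $T_1, \dots, T_m$ of size exactly $k$, such that for each $i \geq 2$ the subtree $T_i$ is attached to $T_{<i} := T_1 \cup \dots \cup T_{i-1}$ by a single edge $r_i p_i$ with $r_i \in T_i$ and $p_i \in T_{<i}$. Such a ``one-portal'' decomposition is a standard tool for bounded-degree trees, obtained by iteratively chopping off subtrees of size $k$ at a leaf of the current centroid tree. Second, I pick a uniformly random ordered partition $V(G) = G_1 \sqcup \dots \sqcup G_m$ with $|G_i| = k$. The central observation flagged in the abstract is that, by Chernoff's inequality combined with a union bound over $i \in [m]$ and $v \in V(G)$, if $k$ is a sufficiently large constant then with probability $1 - o(1)$ the event $\cE$ holds: $\delta(G[G_i]) \geq (1 + \alpha/2)\frac{k}{2}$ for every $i$, and $|N_G(v) \cap G_i| \geq (1 + \alpha/2)\frac{k}{2}$ for every $v \in V(G)$ and every $i$ with $v \notin G_i$.

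Third, conditional on $\cE$, I define $\phi$ sequentially. Embed $T_1$ into $G[G_1]$ by applying Theorem~\ref{thm:KSS95} to a uniformly random relabeling of $G[G_1]$ and reading off the induced embedding; this yields a random embedding whose distribution is, up to constants depending on $k$, uniform across embeddings $T_1 \hookrightarrow G[G_1]$. For $i \geq 2$, since $|N_G(\phi(p_i)) \cap G_i| \geq (1 + \alpha/2)\frac{k}{2}$ by $\cE$, pick $\phi(r_i)$ uniformly from this neighborhood and then apply Theorem~\ref{thm:KSS95} (again after a random relabeling) to extend to an embedding of $T_i$ into $G[G_i]$ with the constraint $r_i \mapsto \phi(r_i)$. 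To verify the spread bound for $v_1, \dots, v_s \in V(T)$ and $u_1, \dots, u_s \in V(G)$, let $v_j \in T_{i_j}$: the event $\{\phi(v_j) = u_j \text{ for all } j\}$ forces $u_j \in G_{i_j}$ for every $j$, which by uniformity of the random partition happens with probability at most $(k/n)^s$. Conditional on this placement, the within-block random embeddings contribute a factor $(c/k)^s$, yielding the desired $(C/n)^s$ bound.

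The main technical obstacle is producing the within-block random embedding. Theorem~\ref{thm:KSS95} only asserts the existence of one embedding, so it must be amplified into a sufficiently spread distribution on the constant-sized block. The random-relabeling device above relies on the fact that each $G[G_i]$ contains many embeddings of $T_i$, with mass well-distributed across vertices; this is an enumeration statement provable from Theorem~\ref{thm:KSS95} by double-counting or by inspecting its proof. A secondary subtlety is that the argument loses a factor when several of the $v_j$ lie in the same block $T_i$, because the partition contribution $(k/n)^s$ is no longer tight; however, since $k$ is constant, this loss is absorbed into the within-block spread constant. Finally, the $o(1)$ failure probability of $\cE$ is easily absorbed into the multiplicative slack of Definition~\ref{def:maindef}.
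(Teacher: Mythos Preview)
The fatal gap is your claim that $\cE$ holds with probability $1-o(1)$ for a sufficiently large \emph{constant} $k$. Chernoff gives a failure probability of order $e^{-\Theta(\alpha^2 k)}$ for each of the $\Theta(n^2/k)$ vertex--block pairs, and your union bound only succeeds once $k = \Omega(\log n)$; but then the spread degrades to $(\Theta(\log n)/n)$-spread, which is not what the theorem asserts. With $k$ constant the expected number of blocks violating the minimum-degree condition is of order $n e^{-\Theta(\alpha^2 k)}$ and diverges, so $\cE$ in fact fails with high probability. Once some blocks are bad, your fixed assignment ``$T_i$ goes into $G_i$'' collapses: there will almost surely be an $i$ at which $G[G_i]$ fails the hypothesis of Theorem~\ref{thm:KSS95}, or at which $\phi(p_i)$ has too few neighbours in $G_i$. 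So the obstacle you name (producing within-block spread) is a red herring; the real obstruction is making all constant-sized blocks simultaneously usable.

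The paper handles precisely this. It accepts that only a $(1-\eps)$-fraction of the constant-sized random sets will be good, and it introduces a second layer of randomness: a random embedding $\psi$ of the bag-tree $T'$ (whose nodes are the subtrees $T_i$) into an auxiliary near-complete graph $A'$ whose vertices are the good sets and whose edges record good pairs (Lemma~\ref{lem:randompartition2}). Because $A'$ has minimum degree $(1-\eps)|A'|$, this $\psi$ can be produced greedily with $O(1/n)$ spread via Lemma~\ref{lem:colourspread}; the blocks are initially taken slightly undersized to leave slack, and a final random reallocation (Corollary~\ref{cor:starmatchings}) repairs the sizes. The spread of $\phi$ then comes from combining the spread of the partition with the spread of $\psi$---no within-block randomness is used or needed (each $T_i$ is embedded deterministically into $\psi(T_i)$ via Theorem~\ref{thm:rootedtree}, which is also what you need in place of Theorem~\ref{thm:KSS95} to enforce $r_i \mapsto \phi(r_i)$). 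A minor point: an exact size-$k$ tree decomposition is not generally available; the paper uses subtrees with sizes in $[C,4C]$ and matches them to block sizes via a colouring.
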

Using the $s=n$ case of Definition~\ref{def:maindef}, Theorem~\ref{thm:mainthm} allows us to deduce that in the context of Theorem~\ref{thm:KSS95}, $G$ contains $\Omega(n)^n$ copies of a given bounded degree tree (see \cite{joos2024counting} for a more precise result). Furthermore, Theorem~\ref{thm:mainthm} implies that the random subgraph $G'\subseteq G$ obtained by keeping each edge of $G$ with probability $\Omega(\log n/n)$ also contains a given bounded degree tree (see \cite{PSSS22} for a precise statement).
\par The original proof \cite{KSS95} of Theorem~\ref{thm:KSS95} constitutes one of the early
applications of the Szemer\'edi regularity lemma (used in conjunction with the blow-up
lemma of Koml\'os, S\'ark\"ozy, and Szemer\'edi). The proof of the more general
Theorem~\ref{thm:mainthm} in \cite{PSSS22} can be interpreted as a randomised
version of the proof in \cite{KSS95}. Indeed, readers familiar with applications
of the regularity/blow-up lemma would know that whilst embedding a target
subgraph with this method, there is actually a lot of flexibility for where each vertex can go. Thus, a choice can be made randomly from the available options as a reasonable strategy towards proving Theorem~\ref{thm:mainthm}.
\par The main contribution of the current paper is a proof of Theorem~\ref{thm:mainthm} that uses Theorem~\ref{thm:KSS95} as a black-box. The most obvious advantage of such a proof is that, as Theorem~\ref{thm:KSS95} has a more modern proof due to Montgomery and Kathapurkar \cite{kathapurkar2022spanning} that circumvents the use of Szemer\'edi regularity lemma, our proof yields a regularity-free proof of Theorem~\ref{thm:mainthm} which naturally has better dependencies between the constants (see Remark~\ref{rem:mainrem}). Our proof is presented in Section~\ref{sec:mainsec}, and Section~\ref{sec:overview} contains an overview explaining the key ideas.
\subsection{Future directions}\label{sec:futuredirections}
\par Our proof can also be interpreted as modest progress towards a more ambitious research agenda, hinted to in \cite{kelly2023optimal}, which asserts that \textit{all} Dirac-type results admit a spread version, regardless of the target structure being embedded. One reason why such a general result could hold is that in Dirac-type results, host graphs have linear minimum degree. Thus, Chernoff's bound can be used to show that almost all $O(1)$-sized induced subgraphs of such dense host graphs maintain the same (relative) minimum degree condition. If the target graph itself has some recursive structure, we may use this to our advantage whilst constructing a random embedding with optimal spread. The strategy would be to first break up the target graph into pieces of size $O(1)$, for example, in the case of Hamilton cycles, we would simply break up the cycle into several subpaths. For each $O(1)$-sized subpath, almost all $O(1)$-sized subsets of the host graph have large enough minimum degree to necessarily contain a copy of the subpath (simply by invoking Dirac's theorem), so we may choose one such host subset randomly while constructing a random embedding with good spread. 
\par A variant of the above strategy was successfully implemented in \cite{kelly2023optimal} in the context of hypergraph Hamilton cycles. In this paper, we show that a similar strategy works even for bounded-degree spanning trees, which, like Hamilton cycles, have a recursive structure (see Section~\ref{sec:splittings}), albeit a lot more complex than that of Hamilton cycles. We believe our methods are fairly general, and they could translate to construct spread distributions in the context of directed trees \cite{kathapurkar2022spanning}, hypertrees \cite{pehova2024embedding, pavez2024dirac}, or other related structures such as spanning grids. 
\par It remains an interesting open problem to find an even larger class of target graphs for which the Dirac-type theorem admits a spread generalisation. For example, it would be natural to investigate graph families with sublinear bandwidth, and we believe our methods could be applicable here. Note that this would entail more than simply randomising the blow-up lemma based proof of the Bandwidth Theorem \cite{bottcher2009proof}, as this theorem does not always give the optimal minimum degree condition for the containment of every graph family with sublinear bandwidth. Though, of course, obtaining a spread version of the bandwidth theorem would be of independent interest.

Koml\'os, S\'ark\"ozy, Szemer\'edi \cite{komlos2001} actually proved a stronger result than Theorem~\ref{thm:KSS95} where the maximum degree hypothesis is relaxed as $\Delta(T)=o(n/\log n)$. It would be interesting to similarly strengthen Theorem~\ref{thm:mainthm} by weakening the assumption on $\Delta(T)$. 
\section{Preliminaries}

We use the standard notation for hierarchies of constants, writing $x\ll y$ to mean that there exists a non-decreasing function $f : (0,1] \rightarrow (0, 1]$ such that the subsequent statements hold for $x\leq f(y)$. Hierarchies with multiple constants are defined similarly.

We will use the following theorem to embed our subtrees of bounded size. It generalises \cref{thm:KSS95} and was proved in \cite{alp_yani_richard}, using tools from \cite{kathapurkar2022spanning}. In particular, the proof of the following theorem does not rely on the Szemer\'edi regularity lemma.

\begin{theorem}[Theorem 4.4 \cite{alp_yani_richard}]\label{thm:rootedtree}
Let $1/n\ll 1/\Delta,\alpha$. Let $G$ be an $n$-vertex graph with $\delta(G)\geq (1/2+\alpha)n$. Let $T$ be an $n$-vertex tree with $\Delta(T)\leq \Delta$. Let $t\in V(T)$ and $v\in V(G)$. Then, $G$ contains a copy of $T$ with $t$ copied to $v$.
\end{theorem}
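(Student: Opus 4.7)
The plan is to reduce Theorem~\ref{thm:rootedtree} to Theorem~\ref{thm:KSS95} via a splitting argument that exploits the paper's core insight that random vertex-subsets of $G$ inherit the minimum degree condition.

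First, let $t_1,\ldots,t_d$ be the neighbors of $t$ in $T$, with $d\leq\Delta$, and let $T_1,\ldots,T_d$ be the components of $T-t$ containing $t_1,\ldots,t_d$, so $\sum_i|T_i|=n-1$. I would randomly partition $V(G)\setminus\{v\}$ into sets $V_1,\ldots,V_d$ with $|V_i|=|T_i|$. A Hoeffding-type bound for the hypergeometric distribution then shows that, with positive probability, each $G[V_i]$ with $|V_i|=\Omega(n)$ satisfies $\delta(G[V_i])\geq(1/2+\alpha/2)|V_i|$ and $|N_G(v)\cap V_i|\geq(1/2+\alpha/2)|V_i|$; any small pieces (those $T_i$ of bounded size) would be embedded greedily using the linear minimum degree of $G$, and the vertices consumed returned to the random partition before applying concentration.

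After fixing such a partition and setting $S_i:=N_G(v)\cap V_i$, the problem reduces to finding, for each linear-sized $i$, an embedding of $T_i$ into $G[V_i]$ with $t_i$ mapped into the target set $S_i$: gluing these embeddings via the central vertex $v$ produces the desired copy of $T$ with $t\mapsto v$, since each edge $vt_i$ of $T$ maps to the edge $v\phi(t_i)\in E(G)$ by construction of $S_i$.

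The main obstacle is this last step, which is a \emph{semi-rooted} variant of Theorem~\ref{thm:KSS95}. Applied as a pure black box, Theorem~\ref{thm:KSS95} gives a copy of $T_i$ in $G[V_i]$ but provides no control over the image of the distinguished vertex $t_i$, even when we only demand that this image land in a set of linear density. Bridging this gap appears to genuinely require opening up the proof of Theorem~\ref{thm:KSS95}: the regularity-free, absorption-based proof of Kathapurkar and Montgomery~\cite{kathapurkar2022spanning} is flexible enough to incorporate a single prefixed vertex by seeding their absorber structure at $v$, and this is the route actually taken in~\cite{alp_yani_richard}. An alternative route would be to iterate the splitting recursively until the subtrees are small enough to embed greedily, but the $\alpha$-slack then shrinks by a constant factor at each of the $\Theta(\log n)$ levels, which is fatal; this is why the single-step reduction plus a KM-style absorber seems to be the right tradeoff.
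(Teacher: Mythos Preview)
The paper does not actually prove Theorem~\ref{thm:rootedtree}: it is imported verbatim as Theorem~4.4 of~\cite{alp_yani_richard} and used as a black box, with the remark that the proof there builds on the absorption machinery of Kathapurkar and Montgomery~\cite{kathapurkar2022spanning}. So there is no in-paper argument to compare your proposal against.

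As for the proposal itself, you have correctly diagnosed the essential obstruction. Your splitting at $t$ and random partition of $V(G)\setminus\{v\}$ cleanly reduces the problem to a ``semi-rooted'' spanning-tree embedding (place $t_i$ anywhere in a target set $S_i$ of linear density), but as you say, this weaker statement is still not a consequence of Theorem~\ref{thm:KSS95} applied as a black box: the KSS theorem gives no handle whatsoever on where any particular vertex lands. Your conclusion that one must open up the Kathapurkar--Montgomery absorber construction to seed it at $v$ is exactly right, and this is indeed what~\cite{alp_yani_richard} does. So the proposal is not a proof but an accurate roadmap to one.

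Two minor points on the reduction step itself. First, the components $T_i$ of $T-t$ can have any size from $1$ to $n-1$, so ``small'' here means anything sublinear, not just bounded; embedding those greedily and then returning vertices to the random partition is more delicate than you suggest, since the concentration bounds require linear-sized parts and the bookkeeping of sizes must still match $|V_i|=|T_i|$ exactly. Second, even after this is patched, the semi-rooted subproblem you arrive at is not obviously easier than the original rooted statement (a single target vertex versus a linear target set are both one constraint away from unrooted KSS), so the reduction buys less than it might first appear; this is another reason the absorber route is the natural one.
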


\subsection{Tree-splittings}\label{sec:splittings}
\begin{definition}
  Let $T$ be an $n$-vertex tree. A \emph{tree-splitting} of size $\ell$ is a
  family of edge-disjoint subtrees $(T_i)_{i\in [\ell]}$ of $T$ such that
  $\bigcup_{i \in [\ell]} E(T_i) = E(T)$. Note that for any $i \neq j$, the
  subtrees $T_i$ and $T_j$ intersect on at most one vertex. Given a tree-splitting $(T_i)_{i \in [\ell]}$ of an $n$-vertex tree $T$, we
  define the \emph{bag-graph} of the tree-splitting to be the graph whose nodes
  are indexed by $[\ell]$ and in which the nodes $i$ and $j$ are adjacent if
  $V(T_i) \cap V(T_j) \neq \emptyset$. A \emph{bag-tree} of a
  tree-splitting is simply a spanning tree of the bag-graph.
\end{definition}



We will use the following simple proposition to divide a tree into subtrees (see, for example, \cite[Proposition~3.22]{randomspanningtree}).

\begin{proposition}\label{littletree} Let $n,m\in \N$ such that $1\leq m\leq n/3$. Given any $n$-vertex tree~$T$ containing a vertex $t\in V(T)$, there are two edge-disjoint trees $T_1,T_2\subset T$ such that $E(T_1)\cup E(T_2)=E(T)$, $t\in V(T_1)$ and $m\leq |T_2|\leq 3m$.
\end{proposition}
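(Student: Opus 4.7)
The plan is to root $T$ at $t$, and for every $v \in V(T)$ let $T_v$ denote the subtree of $T$ induced by $v$ and its descendants. I will exhibit a vertex $p$ (with $t$ an ancestor of $p$, possibly $p = t$) and a subset $S$ of the children of $p$ such that $T_2 \mathrel{\vcenter{\hbox{$:$}}}= \{p\} \cup \bigcup_{u \in S} T_u$, together with the natural edges, has between $m$ and $3m$ vertices. Taking $T_1$ to be the subgraph of $T$ induced on $V(T) \setminus \bigcup_{u \in S} V(T_u)$, the two subtrees automatically share only the vertex $p$, their edge sets partition $E(T)$, and $t \in V(T_1)$ because $p$ is an ancestor of $t$ or equal to $t$ (so removing hanging pieces at $p$ keeps $T_1$ connected and containing $t$).

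To locate $p$, I would perform a greedy descent from $t$: while the current vertex $v$ satisfies $|T_v| > 3m$, move to a child $u$ maximising $|T_u|$. Since the walk descends strictly and $|T_t| = n$, it terminates at a vertex $v^*$ with $|T_{v^*}| \le 3m$. In the easy case $|T_{v^*}| \ge m$, I take $p = v^*$ and let $S$ be the set of all children of $p$, so that $T_2 = T_{v^*}$ has size in $[m, 3m]$ as required.

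The main obstacle is the case where the walk skips over the interval $[m, 3m]$: the terminating vertex $v^*$ has $|T_{v^*}| < m$, so its parent $p$ satisfies $|T_p| > 3m$ and, by the largest-child rule, \emph{every} child $u$ of $p$ obeys $|T_u| \le |T_{v^*}| < m$. In this situation I would perform a greedy packing of children: enumerate the children of $p$ in arbitrary order and add them to $S$ one by one until $\sum_{u \in S} |T_u| \ge m-1$. This threshold is attainable because $\sum_{u} |T_u| = |T_p| - 1 > 3m - 1$. Since the last child added contributes fewer than $m$ vertices, the process terminates with
\[
\sum_{u \in S} |T_u| \in [\,m-1,\, 2m-2\,], \qquad\text{hence}\qquad |T_2| = 1 + \sum_{u \in S} |T_u| \in [\,m,\, 2m-1\,] \subseteq [\,m,\, 3m\,].
\]

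What remains is the verification that $T_1$ and $T_2$ are indeed subtrees with $E(T_1) \cup E(T_2) = E(T)$, but this is immediate from the construction: $T_2$ is built from hanging subtrees at the common vertex $p$, so it is a subtree containing $p$; and $T_1$ is obtained from $T$ by deleting all vertices of those hanging subtrees other than $p$ itself, which preserves connectedness and keeps $t$. No regularity-type tool or heavy machinery is needed — the whole argument is a one-page greedy descent plus a greedy packing step.
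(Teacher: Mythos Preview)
Your argument is correct and is exactly the standard proof of this folklore fact; the paper itself does not prove \cref{littletree} but simply cites it (as \cite[Proposition~3.22]{randomspanningtree}). One trivial slip: in your first paragraph you write ``$p$ is an ancestor of $t$'' when you mean the reverse, but the reasoning is unaffected.
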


This implies that a tree can be divided into many pieces of roughly equal size, as follows (see \cite{alp_yani_richard} for the simple proof).

\begin{corollary}\label{treedecomp}
Let $n,m\in \N$ satisfy $m\leq n$. Given any $n$-vertex tree~$T$, there exists a tree-splitting $(T_i)_{i \in [\ell]}$ of $T$ such that for each $i\in [\ell]$, we have $m\leq |T_i|\leq 4m$.
\end{corollary}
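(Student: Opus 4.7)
The plan is to prove Corollary~\ref{treedecomp} by induction on $n$, iteratively applying Proposition~\ref{littletree} to peel off one subtree of size in $[m, 3m]$ and recursing on the residual. The edge case $m = 1$ is handled trivially by letting the $T_i$ range over the individual edges of $T$ (each having $2 \in [1,4]$ vertices), so I assume $m \geq 2$ in what follows.

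For the base case, I would assume $n \leq 4m$. Since the hypothesis gives $m \leq n$, I take $\ell = 1$ and $T_1 = T$; then $(T_1)$ is a valid tree-splitting with $m \leq |T_1| \leq 4m$. For the inductive step, I would assume $n > 4m$, so in particular $m \leq n/3$ and Proposition~\ref{littletree} applies. Picking any vertex $t \in V(T)$ and applying the proposition produces edge-disjoint subtrees $T', T_\ell \subseteq T$ with $E(T') \cup E(T_\ell) = E(T)$, $t \in V(T')$, and $m \leq |T_\ell| \leq 3m$. Since $T'$ and $T_\ell$ share exactly one vertex, $|T'| = n - |T_\ell| + 1$, which lies between $n - 3m + 1 \geq m + 2$ (using $n \geq 4m + 1$) and $n - m + 1 \leq n - 1 < n$. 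So the inductive hypothesis applies to $T'$ and yields a tree-splitting $(T_i)_{i \in [\ell - 1]}$ of $T'$ with $m \leq |T_i| \leq 4m$ for each $i$. Appending $T_\ell$ to this family gives the required tree-splitting of $T$: the edge sets remain pairwise disjoint (since $E(T_i) \subseteq E(T')$ is disjoint from $E(T_\ell)$ for $i < \ell$), and their union is $E(T') \cup E(T_\ell) = E(T)$.

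The only real point to watch is that the residual tree handed to the recursion never has fewer than $m$ vertices, so the inductive hypothesis is legitimately applicable. This is exactly what forces the base-case threshold to be $4m$ rather than $3m$: when $n \geq 4m + 1$, removing a subtree of size at most $3m$ leaves at least $m + 2$ vertices, comfortably above the cutoff $m$. Everything else is bookkeeping about edge-disjointness, which follows immediately from the edge partition supplied by Proposition~\ref{littletree}.
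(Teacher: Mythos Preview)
Your proof is correct and follows the natural approach of iteratively peeling off subtrees via Proposition~\ref{littletree}; the paper itself does not spell out a proof but simply defers to \cite{alp_yani_richard} for ``the simple proof,'' which is precisely this recursive argument. Your handling of the $m=1$ edge case and the check that $|T'| \geq m+2$ (so the recursion stays in range) are both appropriate.
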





\subsection{Probabilistic results}

Below we give a lemma that encapsulates a simple argument often used when computing the spreadness of a random permutation.

\begin{lemma}
    \label{lem:perm_spread}
    Let $n \in \mathbb{N}$, $s \leq n$, and $L_1,\ldots,L_s \subseteq [n]$. For any distinct integers $1 \leq x_1,\ldots,x_s \leq n$, a uniformly sampled permutation $\pi$ of $[n]$ satisfies 
    $
    \P\left[\bigwedge_{i=1}^s \pi(x_i) \in L_i\right] \leq \prod_{i=1}^s \frac{e|L_{i}|}{n}.
    $
\end{lemma}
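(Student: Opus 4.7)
The plan is to bound the probability directly by counting permutations. For any fixed tuple $(y_1, \dots, y_s)$ of \emph{distinct} elements of $[n]$ with $y_i \in L_i$, the number of permutations $\pi$ of $[n]$ satisfying $\pi(x_i)=y_i$ for all $i \in [s]$ is exactly $(n-s)!$, since the values of $\pi$ on the remaining $n-s$ positions may be chosen arbitrarily. Summing over all admissible tuples,
$$\P\left[\bigwedge_{i=1}^s \pi(x_i) \in L_i\right] = \frac{(n-s)!}{n!} \cdot \#\{(y_1, \dots, y_s) \text{ distinct} : y_i \in L_i\}.$$

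Next I would bound this count crudely from above by $\prod_{i=1}^s |L_i|$, simply dropping the distinctness requirement. Rewriting $n!/(n-s)! = \binom{n}{s}\, s!$, matters reduce to establishing
$$\binom{n}{s}\, s! \geq \left(\frac{n}{e}\right)^s,$$
which combined with the preceding display yields exactly $\prod_{i=1}^s (e|L_i|/n)$.

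For this final inequality I would invoke two standard elementary bounds multiplicatively. First, $\binom{n}{s} \geq (n/s)^s$: writing $\binom{n}{s} = \prod_{i=0}^{s-1}(n-i)/(s-i)$, each factor satisfies $(n-i)/(s-i) \geq n/s$, since cross-multiplying gives $i(n-s) \geq 0$. Second, $s! \geq (s/e)^s$, which is immediate from the Taylor series $e^s = \sum_{k \geq 0} s^k/k! \geq s^s/s!$. Multiplying these produces $\binom{n}{s}\, s! \geq (n/s)^s (s/e)^s = (n/e)^s$, as required.

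There is no real obstacle: the lemma is a pure counting argument wrapped around two one-line factorial inequalities. The only point requiring any care is that the bounds remain valid across the entire range $1 \leq s \leq n$, including the boundary $s = n$, where the binomial inequality degenerates to $n! \geq (n/e)^n$ and follows from the same Taylor-series estimate.
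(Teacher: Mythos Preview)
Your proof is correct and essentially the same as the paper's. The paper writes the probability as a product of conditional probabilities $\prod_{i=1}^s \P[\pi(x_i)\in L_i \mid \bigwedge_{j<i}\pi(x_j)\in L_j] \le \prod_{i=1}^s |L_i|/(n-i+1)$, which is exactly your bound $(n-s)!/n!\cdot\prod_i|L_i|$ in multiplicative form; both then finish by invoking $\prod_{i=1}^s (n-i+1)=n!/(n-s)! \ge (n/e)^s$, with the only difference being that you spell out this last inequality via $\binom{n}{s}\ge(n/s)^s$ and $s!\ge(s/e)^s$, whereas the paper simply attributes it to Stirling's approximation.
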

\begin{proof}[Proof of \cref{lem:perm_spread}]
    We have the following,
    \begin{align*}
         \P\left[\bigwedge_{i=1}^s \pi(x_i) \in L_i\right] &= \prod_{i=1}^s \P\left[ \pi(x_i) \in L_i \middle| \bigwedge_{j=1}^{i-1} \pi(x_j) \in L_j \right] 
          \leq \prod_{i=1}^s \frac{|L_i|}{n-i+1} \leq \prod_{i=1}^s \frac{e|L_{i}|}{n},
    \end{align*}
    where in the last step we used the fact that $\prod_{i=1}^s n-i+1 \geq \left(\frac{n}{e}\right)^s$, which is a well-known application of Stirling's approximation.
\end{proof}

Next, we present a lemma that records several properties we need from a random vertex-partition of a dense graph. The proof consists of standard applications of well-known concentration inequalities, namely Chernoff's bound and McDiarmid's inequality, and can be found in Appendix~\ref{app:b}.

\begin{lemma}\label{lem:randompartition2} Let $1/n\ll \eta, 1/C, 1/K, \delta,
  \alpha$, and suppose $1/C \ll 1/K \ll \alpha$. Fix two sequences of integers $(a_c)_{C\le c \le 4C}\geq 1$ and $(b_c)_{C\le c\le 4C}\geq 1$
  such that $\sum_{C \le c \le 4C} b_ca_c<n$, $b_c\geq \eta n$ and $C-K \leq a_c 
  \leq 4C-K$ for each $C \le c \le 4C$. Set $\eps := e^{-\alpha^2C/12}$. Let $G$ be a
  $n$-vertex graph with $\delta(G)\geq (\delta+\alpha)n$ and let $v \in V(G)$. Then, there exists a random labelled partition $\mathcal{R} = (R_c^j)_{C \le c \le 4C, j \in [b_c]}$ of a subset of $V(G)\setminus \{v\}$ into $\sum_{C\le c \le 4C} b_c$ parts, with the following properties,
    \begin{enumerate}[label ={{\emph{\textbf{A\arabic{enumi}}}}},
      ref=\textbf{A\arabic{enumi}}]
    \item \label{cond:A0} $\forall c \in [C, 4C], \forall j \in [b_c], |R_c^  j| = a_c$, hence there are exactly $b_c$ parts of size $a_c$;
    \item $\forall R_c^j \in \mathcal{R}, \, |\{ u \in V(G) \, | \,  \deg(u,R_c^j) \geq  (\delta + \frac{\alpha}{2}) |R_c^j+u|\}| \geq (1 - 3e^{-\frac{\alpha^2C}{10}}) |V(G)|;$   \label{cond:A1}
    \item  $\forall u \in V(G), \, |\{ R_c^j \in \mathcal{R} \, | \,  \delta(G[R_c^j+u]) \geq (\delta + \frac{\alpha}{2})|R_c^j|\}| \geq (1 - 3e^{-\frac{\alpha^2C}{10}}) |\mathcal{R}|.$ \label{cond:A2}
    \end{enumerate}

  Moreover, call $R_c^j\in \mathcal{R}$ good if $\delta(G[R_c^j])\geq (\delta+\alpha/2)|R_c^j|$. Call
  $R_c^j,R_d^k\in \mathcal{R}$ a good pair if
  $\delta(G[R_c^j+v])\geq (\delta+\alpha/2)|R_c^j+v|$ for each $v\in R_d^k$, and the same statement holds with $j$ and $c$ interchanged with $k$ and $d$. Let $A$ be the auxiliary graph with vertex set 
  $\mathcal{R}$ where $R_c^j\sim_A R_d^k$ if and only if $R_c^j,R_d^k$ is a good
  pair. Then, there exists a subgraph $ A'$ of $A$ such that
  \begin{enumerate}[label ={{\emph{\textbf{B\arabic{enumi}}}}},
      ref=\textbf{B\arabic{enumi}}]
  \item For all $c\in[C;4C], A'_c := \{ R_c^j \in A' : |R_c^j| = a_c \}$ has size at least $(1 - \eps) b_c$; \label{cond:B1}
  \item $ \forall R_c^j \in A', \forall d \in [C, 4C], \deg_{A'}(R_c^j, A'_d) \ge (1 -
    \eps)|A'_d|$; \label{cond:B2}
  \item $\forall R_c^j \in A'$, $R_c^j$ is good. \label{cond:B3}
  \end{enumerate}

  Furthermore, the following spreadness property holds. For any function
  $f\colon \{u_1,\ldots, u_s\}\to \mathcal{R}$ (where
  $\{u_1,\ldots, u_s\}\subseteq V(G)$),
  $\mathbb{P}[u_i\in f(u_i)]\leq (12C/n)^s$.

\end{lemma}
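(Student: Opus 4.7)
The plan is to define $\mathcal{R}$ from a uniformly random permutation $\pi \colon [n-1] \to V(G)\setminus\{v\}$. Fix once and for all disjoint intervals $I_c^j \subseteq [n-1]$ of length $a_c$, one for each label $(c,j)$, and set $R_c^j := \pi(I_c^j)$; this fits inside $[n-1]$ because $\sum_{c,j} a_c b_c < n$. Property~\ref{cond:A0} is then automatic, and marginally each $R_c^j$ is a uniformly random $a_c$-subset of $V(G)\setminus\{v\}$. The spreadness statement follows immediately from \cref{lem:perm_spread} applied to $\pi^{-1}$ (which is also a uniformly random permutation): the event $u_i \in f(u_i)$ is exactly $\pi^{-1}(u_i) \in I_{f(u_i)}$, each $I_{f(u_i)}$ has size at most $4C$, and $4e < 12$, so
\begin{equation*}
\mathbb{P}\!\left[\bigwedge_{i=1}^s u_i \in f(u_i)\right] \le \prod_{i=1}^{s} \frac{e \cdot 4C}{n-1} \le \left(\frac{12C}{n}\right)^{\!s}.
\end{equation*}

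For properties \ref{cond:A1} and \ref{cond:A2}, the key per-pair estimate is that, marginally, $R_c^j$ is a uniformly random $a_c$-subset, so $\deg(u, R_c^j)$ is a hypergeometric variable with mean $\ge (\delta+\alpha)a_c(1-o(1))$; Chernoff's inequality for sampling without replacement then gives $\mathbb{P}[u \text{ bad for } R_c^j] \le e^{-\Omega(\alpha^2 C)}$ uniformly in $u$, $c$, $j$. To upgrade this pointwise bound to the per-part quantifier in \ref{cond:A1} and the per-vertex quantifier in \ref{cond:A2}, concentrate the relevant count as a function of $\pi$ via McDiarmid's inequality: swapping two entries of $\pi$ alters at most two of the $R_c^j$ and therefore flips the bad-indicator for only $O(1)$ pairs, yielding an $O(1)$ coordinate bound. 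The hierarchy $1/n \ll 1/C \ll 1/K \ll \alpha$ makes this concentration sharp enough to survive a union bound over the $\le n$ parts (for \ref{cond:A1}) and $\le n$ vertices (for \ref{cond:A2}); since both properties then hold with probability $1 - o(1)$, we may condition on them at the cost of a $(1+o(1))$ factor in the spreadness, absorbed into the constant.

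Having conditioned on \ref{cond:A1} and \ref{cond:A2}, define $A'$ by removing from $\mathcal{R}$ any part that either fails to be good or whose degree in $A$ to some size class $\{R_d^k\}$ falls below $(1-\eps/3) b_d$. Because each part has size at most $4C$, a union bound of \ref{cond:A2} over the vertices of a candidate part shows that only an $\eps$-fraction of each size class is discarded, giving \ref{cond:B1}; property \ref{cond:B3} holds by construction; and \ref{cond:B2} follows by subtracting the removed $\eps$-fraction from the imposed $A$-degree lower bound. The main obstacle is the per-part form of \ref{cond:A1}: a naive Markov argument on $B(R_c^j) = |\{u : u \text{ bad for } R_c^j\}|$ is too weak because a single element swap inside $R_c^j$ can flip the bad/good status of many $u$'s at once, so the McDiarmid estimate must be phrased at the level of the full permutation $\pi$ (where one swap touches only two parts) rather than at the level of a single $a_c$-subset, where the coordinate bound would be $\Theta(n)$ and useless.
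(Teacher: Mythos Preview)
Your approach is essentially the one the paper takes: generate the parts from consecutive intervals of a uniformly random permutation, get the per-pair degree bound from the hypergeometric tail (Lemma~2.4 in the paper), promote it to the required counts via McDiarmid's inequality for permutations, condition on the resulting high-probability event, and read off spreadness from \cref{lem:perm_spread}. The spreadness computation and the treatment of \ref{cond:A2} are fine and match the paper.

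The one place where your write-up does not go through is precisely the one you flag at the end, namely the per-part quantifier in \ref{cond:A1}. Your proposed fix (``phrase McDiarmid at the level of the full permutation $\pi$'') does not help: if a transposition of $\pi$ moves a vertex into or out of the interval $I_c^j$, then $R_c^j$ changes by one element, and this single change can flip the indicator ``$u$ is bad for $R_c^j$'' for $\Theta(n)$ vertices $u$ at once, so the Lipschitz constant for the count $B(R_c^j)$ is still $\Theta(n)$, not $O(1)$. The paper resolves this differently: it \emph{fixes the set} $R_c^j$ and regards the remaining parts as coming from a random permutation of $V(G)\setminus R_c^j$; the concentrated quantity is then the number of good $u$ that happen to land in $\mathcal R_d$, and swapping two values of that permutation can change this count by at most~$2$ because $R_c^j$ itself is no longer moving.

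A second, smaller gap: you try to deduce \ref{cond:B1}--\ref{cond:B3} deterministically from \ref{cond:A1}--\ref{cond:A2} by discarding bad parts, but \ref{cond:A2} as stated only bounds the number of bad parts \emph{globally} over $\mathcal R$, not inside each size class $\{R_d^k:k\in[b_d]\}$; since $b_d$ can be a small fraction of $|\mathcal R|$, the union bound you suggest does not give the per-class bound $(1-\eps)b_d$ required by \ref{cond:B1} and \ref{cond:B2}. The paper instead establishes \ref{cond:B1}--\ref{cond:B2} by a separate McDiarmid argument carried out per size class (and simply takes $A'$ to be the set of good parts, which gives \ref{cond:B3} for free).
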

\subsection{Good spread with high minimum degree}
If the minimum degree of the host graph is larger than the size of the target graph we wish to embed, a simple random greedy algorithm can find a distribution with good spread. The following lemma records a version of this where vertices of the target and host graphs are coloured, and the embedding we produce respects the colour classes. This is used in the proof whilst (randomly) embedding the bag tree of a tree splitting into an auxiliary graph where vertices represent random subsets of a host graph, edges represent good pairs (as in Lemma~\ref{lem:randompartition2}), and the colours represent the size of the random set.
\begin{lemma}\label{lem:colourspread} Let $1/n\ll \gamma \ll \eta \leq 1$.  Let
  $G$ be a graph and $v \in V(G)$. Let $V_1\cup \ldots \cup V_t$ be a partition of
  $V(G) \setminus v$ where $|V_i| \geq \eta n$ for all $i\in [k]$ and so that
  for all $i\in [k]$, for all $u\in V(G)$, $\deg(u, V_j)\geq
  (1-\gamma)|V_j|$. Let $T$ be a tree and let $t$ be a vertex of $T$. Let $c$ be a
  $k$-colouring of $T-t$ such that the number
  of vertices of colour $i$ is at most $(1-\eta)|V_i|$. Then, there exists a random embedding $\phi\colon T\to G$ such that the following all hold.
  \begin{enumerate}
  \item With probability $1$, $\phi(t) = v$.
  \item With probability $1$, any $i$-coloured vertex of $T$ is embedded in $V_i$.
  \item The random embedding induced via $\phi$ by restricting to the
    forest $T-t$ is a $\left(\frac{2}{\eta^2 n}\right)$-spread embedding.
  \end{enumerate}
\end{lemma}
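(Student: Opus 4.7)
The plan is to embed $T$ via a random greedy process along a BFS ordering rooted at $t$. We set $\phi(t) = v$; enumerating $V(T)\setminus\{t\}$ as $u_1, u_2, \ldots$ in BFS order so that each $u_j$ has its parent $w_j$ already embedded, we sample $\phi(u_j)$ uniformly at random from
\[
S_j := \bigl(N_G(\phi(w_j)) \cap V_{c(u_j)}\bigr) \setminus \{\phi(u_1),\ldots,\phi(u_{j-1})\}.
\]
Properties (1) and (2) are built into the rule.

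To see that $\phi$ is well-defined and is an injection, I need $S_j \neq \emptyset$ throughout. The degree hypothesis gives $|N_G(\phi(w_j)) \cap V_{c(u_j)}| \geq (1-\gamma)|V_{c(u_j)}|$. Since at most $(1-\eta)|V_{c(u_j)}|$ vertices of $T-t$ carry colour $c(u_j)$, at most that many of $\phi(u_1), \ldots, \phi(u_{j-1})$ lie in $V_{c(u_j)}$. Combined with $|V_{c(u_j)}| \geq \eta n$ and $\gamma \ll \eta$, this yields
\[
|S_j| \geq (\eta - \gamma)|V_{c(u_j)}| \geq \tfrac{\eta^2 n}{2}.
\]

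For property (3), fix distinct vertices $x_1,\ldots,x_s \in V(T-t)$ and distinct images $y_1,\ldots,y_s \in V(G)$, and assume $y_i \in V_{c(x_i)}$ for every $i$ (otherwise the relevant probability is zero). Reorder so the $x_i$ appear in increasing BFS order, and apply the chain rule. When I am about to sample $\phi(x_i)$, I condition on the full history of the random process through the step immediately before $x_i$; for any such history consistent with $\phi(x_j)=y_j$ for $j<i$, the conditional distribution of $\phi(x_i)$ is uniform on $S(x_i)$, a set of size at least $\eta^2 n/2$. Averaging this pointwise bound over histories gives a conditional probability of at most $2/(\eta^2 n)$ for $\phi(x_i)=y_i$; taking the product over $i$ gives the desired spread.

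The construction is elementary and the only mildly delicate point is the order of conditioning in the spread calculation: one must condition on the entire prefix of the random process rather than merely on the previously fixed values $\phi(x_j)$, so that the ``uniform on $S(x_i)$'' statement is available pointwise and survives averaging. Everything else is a direct consequence of the degree and colouring hypotheses together with $\gamma \ll \eta$.
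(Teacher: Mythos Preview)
Your proposal is correct and follows essentially the same approach as the paper: a random greedy embedding along a tree-ordering rooted at $t$, with the same lower bound $|S_j|\ge(\eta-\gamma)\eta n\ge \eta^2 n/2$ on the set of available images, and the same chain-rule spread computation. Your remark about conditioning on the full prefix of the process (rather than only on the earlier fixed values $\phi(x_j)=y_j$) makes explicit the averaging step that the paper leaves implicit.
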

\begin{proof} 
Let us consider an ordering $t, t_1,t_2,\ldots,t_m$ of the vertices of $T$
rooted in $t$, where $T[\{t,t_1,\ldots, t_i\}]$ is a subtree for each $i\in[m]$. We define $\phi : T \rightarrow G$ greedily vertex by vertex
following the ordering of $V(T)$. Let $\phi(t):= v$. We denote by $p_i$ the parent of $t_i$ in $T$ for all $i\geq 1$, and define $\phi(t_i)$ conditioned on some value of $\phi(t),\phi(t_1),\ldots \phi(t_{i-1})$ to be the random variable following the uniform distribution over $\left(V_{c(t_i)} \cap N(\phi(p_i))\right) \setminus \cup_{j=1}^{i-1} \phi(t_j)$. Note that,
$$
\left|\left(V_{c(t_i)} \cap N(\phi(p_i) \right) \setminus \cup_{j=1}^{i-1} \{\phi(t_j)\}\right| \geq \deg(p_i,V_{c(t_i)}) - |c^{-1}(c(t_i))| \geq \left((1 - \gamma) - (1 - \eta)\right) |V_{c(t_i)}| \geq (\eta - \gamma) \eta n.
$$

We now discuss the spreadness of $\phi$. Fix an integer $s \leq k$ and two
sequences $t'_1,\ldots,t'_s \in V(T) \setminus \{t\}$ and $v_1,\ldots,v_s \in
V(G) \setminus \{v\}$ of distinct elements. Moreover, let us suppose that $t'_1,t'_2,\ldots,t'_s$ appear in this order in the ordering chosen above. Observe that
\begin{align*}
    \mathbb{P}\left[\bigwedge_{i=1}^{s} \phi(t'_i) = v_i\right] & = \prod_{i=1}^s \mathbb{P}\left[\phi(t'_i) = v_i \, \middle| \, \bigwedge_{j=1}^{i-1} \phi(t'_j) = v_j\right] \leq \left(\frac{1}{(\eta - \gamma)\eta n}\right)^s.
\end{align*}
The last inequality follows as for any $u \in V(G)$ and $i \in [k]$ the probability that $\phi(t_i) = u$ conditioned on any value of $\phi(t),\phi(t_1),\ldots,\phi(t_{i-1})$ is at most $\frac{1}{(\eta - \gamma)\eta n}$. In particular, the lemma follows as we have $\gamma \leq \eta/2$.\end{proof}

\section{Proof of Theorem~\ref{thm:mainthm}}\label{sec:mainsec}
\subsection{Overview}\label{sec:overview}
As briefly discussed in Section~\ref{sec:futuredirections}, our proof capitalises on several desirable properties (as collected in Lemma~\ref{lem:randompartition2}) satisfied by a random partition of the vertex-set of the host graph $G$. In this way, our proof bears resemblance to the proof in \cite{kelly2023optimal}, and we recommend the reader to investigate the proof sketch given there. 
\par To start, we split $T$ into
$O(1)$-sized edge-disjoint subtrees via \cref{treedecomp} and take a random
partition $\mathcal{R}$ of the host graph given by
Lemma~\ref{lem:randompartition2} (we will comment on the choice of parameters momentarily). Almost all of the random subsets $R$ in $\mathcal{R}$ have good enough minimum degree to contain all bounded degree trees of size $|R|$ by just applying Theorem~\ref{thm:KSS95} as a black-box. Now, we need to decide (randomly), which subtrees of $T$ will embed into which random subsets of $V(G)$. This corresponds to randomly embedding a bag-tree of the tree-splitting into $A'$, the auxiliary graph given in Lemma~\ref{lem:randompartition2} that encodes the pairs of random sets with good minimum degree. Thus, we reduce Theorem~\ref{thm:mainthm} to a weaker version of itself where the host graph $G$ is nearly complete (thanks to \ref{cond:B2}). Unfortunately, we do not have a way of directly producing the necessary random embedding even in this simpler context where the minimum degree of the host graph is extremely large. In contrast, a similar method is employed in \cite{kelly2023optimal} to embed hypergraph Hamilton cycles, but here the ``bag-tree'' of a hypergraph Hamilton cycle is simply a $2$-uniform Hamilton cycle, which is simpler to randomly embed with good spread using elementary methods.
\par To circumvent this problem, we introduce the following trick which we hope might have further applications. While applying Lemma~\ref{lem:randompartition2}, we make the sizes of the random sets an $\eps$-fraction smaller than the sizes of the subtrees they are meant to contain. This gives us extra space as we then have more random sets than subtrees we need to embed. Afterwards, using a simple random greedy strategy (see Lemma~\ref{lem:colourspread}), we can produce the necessary random embedding $\psi$ of the bag-tree into $A'$. Two problems remain: some random sets are unused by $\psi$ and the random sets that are used by $\psi$ are too small to contain the subtrees we wish to embed. We fix both of these issues by randomly reallocating all vertices of the unused random sets into the used random sets using Corollary~\ref{cor:starmatchings}. We need a fair bit of precision in this final step, which is discussed more in Section~\ref{sec:reallocation}.
\par To finish the
embedding, we need to convert $\psi$ into a random embedding $\phi : V(T)
\mapsto V(G)$. We may do this by ordering the subtrees so that each subtree
intersects the previous ones in at most one vertex, and using
Theorem~\ref{thm:rootedtree}, which is a slight strengthening of Theorem~\ref{thm:KSS95} that allows us to prescribe the location of a root vertex in advance. To illustrate, suppose $T_1,\ldots, T_{i-1}$ are already
embedded, and suppose that $\psi(T_i)$ is an empty random set large enough to
contain $T_i$. Say there exists some $t\in V(T_{i})\cap V(T_j)$ for some $j<i$,
then $\phi(t)$ is already determined, as $T_j$ is already embedded. The properties of $\psi$, coming from
\cref{lem:randompartition2} and Lemma~\ref{lem:colourspread}, guarantee that
$\phi(t)\cup \psi(T_i)$ has large minimum degree, so we may invoke
Theorem~\ref{thm:rootedtree} to extend $\phi$ to embed vertices of $T_i$ in $\psi(T_i)$, respecting the previous choice of $\phi(t)$, as desired.

\subsection{Spread distributions on star matchings}\label{sec:reallocation}
As described in Section~\ref{sec:overview}, we need a way to randomly shuffle around vertices in a random partition of $G$ to adjust the sizes of certain random sets, and we need to do this without damaging the randomness properties or the minimum degree conditions of the partition (coming from Lemma~\ref{lem:randompartition2}). The following lemma gives us a way to achieve this in the special case where each random set is meant to receive exactly one new element. It should be interpreted as applying to some auxiliary graph where one side consists of random sets, and the other side consists of vertices to be redistributed into the random sets, and an edge denotes that the vertex has good minimum degree into the random set, hence can potentially be redistributed into that random set without damaging minimum degree conditions. 
\begin{lemma}[Lemma 3.1 in \cite{kelly2023optimal}]\label{lem:graphmatchings}
There exists an absolute constant $C_{\ref*{lem:graphmatchings}}$ with the following property. If $G$ is a balanced bipartite graph on $2n$ vertices with $\delta(G)\geq 3n/4$, then there exists a random perfect matching $M$ of $G$ such that for any collection of edges $e_1,\ldots, e_s\in E(G)$, $\mathbb{P}[\bigwedge_{i\in [s]} e_i\in M]\leq (C_{\ref*{lem:graphmatchings}}/n)^s$. 
\end{lemma}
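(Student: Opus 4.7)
The plan is to construct $M$ via a randomized greedy matching under a uniformly random vertex ordering of $A$. I would sample $\sigma \in S_n$ uniformly at random and process the vertices of $A = \{a_1,\ldots,a_n\}$ in the order $a_{\sigma(1)},\ldots,a_{\sigma(n)}$: at step $i$, let $U_i \subseteq B$ denote the currently unmatched vertices of $B$, set $N_i := N_G(a_{\sigma(i)}) \cap U_i$, and match $a_{\sigma(i)}$ to a uniformly random element of $N_i$. If $N_i = \emptyset$ at any step, the procedure is restarted (equivalently, we condition on success).

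The first task is to verify that success has probability bounded below by an absolute constant. For $i \leq 3n/4$, failure is impossible: deterministically $|N_i| \geq \deg_G(a_{\sigma(i)}) - (i - 1) \geq 3n/4 - (i - 1) \geq 1$. For the tail $i > 3n/4$, a concentration/symmetry argument (a martingale analysis of the evolving matched set $B \setminus U_i$) should show that this set is distributed close to a uniformly random $(i-1)$-subset of $B$, so $|N_i|$ is typically of order $(n-i+1)\deg_G(a_{\sigma(i)})/n$, and the algorithm reaches the end with constant probability.

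For the spread bound, fix edges $e_j = (x_j, y_j)$, $j \in [s]$, with distinct $x_j$'s, and let $i_j := \sigma^{-1}(x_j)$. Conditioning on $\sigma$, the chain rule gives
\[
\mathbb{P}\!\left[\bigwedge_{j \in [s]} M(x_j) = y_j \,\middle|\, \sigma\right] \;\leq\; \prod_{j \in [s]} \frac{1}{|N_{i_j}|}.
\]
Edges processed early ($i_j \leq 3n/4$) contribute a factor of $O(1/n)$ each, since then $|N_{i_j}| \geq n/4$ deterministically. For edges processed late, $|N_{i_j}|$ may be small, but the event ``$x_j$ falls in position $>3n/4$'' has probability only $1/4$ over random $\sigma$, giving the needed averaging. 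Mimicking the exact calculation on $K_{n,n}$ --- where the cancellation between the probability that $y$ is still unmatched at step $i_j$ and the probability that $x_j$ selects $y$ among its unmatched neighbours yields $\mathbb{P}[e \in M] = 1/n$ exactly --- one obtains the desired joint bound $(C/n)^s$ after dividing by the constant success probability.

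The main obstacle is making the symmetry/concentration heuristic rigorous, both for liveness in the tail of $\sigma$ and for controlling the joint spread contribution of late-processed edges: the greedy choices and the random ordering induce nontrivial correlations between $|N_{i_j}|$ and the events $\{M(x_j)=y_j\}$. The hypothesis $\delta(G) \geq 3n/4$ is exactly what provides the slack needed to couple the random greedy process on $G$ with the exactly computable process on $K_{n,n}$; a weaker density bound would instead require an absorber-based approach.
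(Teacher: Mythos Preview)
The paper does not prove this lemma; it is quoted as Lemma~3.1 of \cite{kelly2023optimal} and used as a black box, so there is no in-paper proof to compare against.

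Your proposal is explicitly a sketch, and two concrete gaps stand out. First, the deterministic bound on $|N_{i_j}|$ is miscomputed: your own inequality gives $|N_{i_j}| \ge 3n/4 - (i_j - 1)$, which equals $1$ at $i_j = 3n/4$, not $n/4$; the estimate $|N_{i_j}| \ge n/4$ only holds for $i_j \le n/2 + 1$, so the ``safe'' regime with a clean $O(1/n)$ factor covers only half the process, and the tail you must analyse is correspondingly longer. Second, and more seriously, the spread argument for the tail is not an argument. You must bound a \emph{product} $\prod_j 1/|N_{i_j}|$ in expectation over $\sigma$ and the greedy choices, conditionally on success; an edge processed near the end can contribute a factor $\Theta(1)$ rather than $\Theta(1/n)$, and observing that each individual $x_j$ lands in the tail with bounded probability does not turn a product of potentially $\Theta(1)$ terms into $(C/n)^s$. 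The $K_{n,n}$ cancellation you invoke relies on exact exchangeability of the $B$-vertices under the process, which is precisely what a non-complete $G$ destroys, so ``mimicking'' it is not a proof. At minimum you would need a genuine coupling with the complete case, or a joint estimate tracking how many of the $x_j$ land late together with the conditional behaviour of $|N_{i_j}|$ given the earlier events $\{M(x_{j'}) = y_{j'}\}$ --- none of which is supplied. You also have not established that the success probability is bounded below by an absolute constant, without which conditioning on success could in principle destroy the spread bound.
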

As we make each original random set multiple vertices smaller than it needs to be to be able to contain the corresponding subtree (recall Section~\ref{sec:overview}), each random set actually has to receive more than just a single new element. The next result generalises the previous lemma into this context. 
\begin{corollary}\label{cor:starmatchings} There exists an absolute constant $C_{\ref*{cor:starmatchings}}$ with the following property. Let $1/n\ll 1/k$. 
    Let $G$ be a bipartite graph with partition $(A,B)$, with $|A|=n, |B|=kn$. Suppose for each $a\in A$, $d(a, B)\geq (99/100)|B|$ and for each $b\in B$, $d(b, A)\geq (99/100)|A|$. Then, there is a random $K_{1,k}$-perfect-matching $M$ of $G$ (where the centres of the $K_{1,k}$ are embedded in $A$) such that for any collection of edges $e_1,\ldots, e_s\in E(G)$, $\mathbb{P}[\bigwedge_{i\in [s]} e_i\in M]\leq (C_{\ref*{cor:starmatchings}}/n)^s$. 
\end{corollary}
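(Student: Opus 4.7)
The plan is to reduce Corollary~\ref{cor:starmatchings} to Lemma~\ref{lem:graphmatchings} by a standard blow-up of the $A$-side. Concretely, I would build an auxiliary balanced bipartite graph $G'$ on vertex parts $(A', B)$, where $A'$ is obtained from $A$ by replacing each $a\in A$ with $k$ distinct ``copies'' $a^{(1)},\dots,a^{(k)}$; connect $a^{(i)}$ to $b\in B$ in $G'$ whenever $ab\in E(G)$. Both sides of $G'$ have size $kn$. Each copy $a^{(i)}$ has degree $\deg_G(a,B)\ge (99/100)kn$, and each $b\in B$ has degree $k\cdot \deg_G(b,A)\ge (99/100)kn$. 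In particular $\delta(G')\ge 3(kn)/4$, so Lemma~\ref{lem:graphmatchings} (applied with parameter $kn$) yields a random perfect matching $M'$ of $G'$ with $\mathbb{P}[\bigwedge_{i\in[s]} f_i\in M']\le (C_{\ref{lem:graphmatchings}}/(kn))^s$ for any set of $s$ edges $f_1,\dots,f_s$ of $G'$.

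The natural projection $\pi\colon E(G')\to E(G)$ that forgets the copy index sends $M'$ to a subgraph $M$ of $G$ in which each $a\in A$ has degree exactly $k$ and each $b\in B$ has degree exactly $1$; that is, $M$ is a $K_{1,k}$-perfect-matching of $G$ with centres in $A$. It remains to verify the spreadness. Fix distinct edges $e_1,\dots,e_s\in E(G)$ with $e_i=a_ib_i$. If two of the $b_i$'s coincide, then $\mathbb{P}[\bigwedge e_i\in M]=0$, so assume the $b_i$'s are distinct. For each $i$ let $F_i\subseteq E(G')$ be the set of preimages of $e_i$ under $\pi$; note $|F_i|=k$. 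The event $\bigwedge_{i} e_i\in M$ is the disjoint union, over tuples $(f_1,\dots,f_s)\in F_1\times\cdots\times F_s$ that are pairwise vertex-disjoint in $G'$, of the events $\bigwedge_i f_i\in M'$.

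The number of such tuples is at most $k^s$: when the $a_i$'s are distinct, the $f_i$'s automatically come from disjoint copy-classes and there are exactly $k^s$ tuples; when some $a_i$'s coincide, the disjointness constraint on copies of a common $a$ only decreases the count (e.g. $k(k-1)\le k^2$ if two $a_i$'s agree). Applying the spreadness bound from Lemma~\ref{lem:graphmatchings} to each tuple and union bounding gives
\[
\mathbb{P}\!\left[\bigwedge_{i\in[s]} e_i\in M\right]\le k^s\cdot\left(\frac{C_{\ref{lem:graphmatchings}}}{kn}\right)^{\!s}=\left(\frac{C_{\ref{lem:graphmatchings}}}{n}\right)^{\!s},
\]
so we may take $C_{\ref{cor:starmatchings}}=C_{\ref{lem:graphmatchings}}$. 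The only mildly delicate point, and the one I would write out carefully, is the case analysis above when several $a_i$'s coincide, to ensure that the ``at most $k^s$ tuples'' bound survives the vertex-disjointness restriction coming from $M'$ being a matching in $G'$; the rest is bookkeeping.
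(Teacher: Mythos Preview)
Your argument is correct and yields the same constant $C_{\ref{cor:starmatchings}}=C_{\ref{lem:graphmatchings}}$, but the route is genuinely different from the paper's. The paper partitions $B$ into $k$ equal parts $B_1,\dots,B_k$ (using a random equipartition and a concentration bound, which is where the hypothesis $1/n\ll 1/k$ is used), applies Lemma~\ref{lem:graphmatchings} once to each balanced bipartite graph $G[A\cup B_i]$, and takes $M=\bigcup_i M_i$; the spreadness then factors over the independent $M_i$'s. Your blow-up of $A$ instead applies Lemma~\ref{lem:graphmatchings} a single time to a graph on $2kn$ vertices and recovers the star-matching by projection. Your approach is arguably cleaner in that it sidesteps the random-partition step (and hence the appeal to Lemma~\ref{lem:McDAppl}); the paper's approach has the minor advantage that the spreadness computation is a one-line product rather than a union bound over $k^s$ tuples.

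Two small remarks on your write-up. First, the ``disjoint union'' observation is correct but unnecessary for the bound: since the $e_i$ are distinct the preimage sets $F_i$ are pairwise disjoint, so for \emph{every} tuple $(f_1,\dots,f_s)\in F_1\times\cdots\times F_s$ the $f_i$'s are distinct edges of $G'$ and Lemma~\ref{lem:graphmatchings} already gives $(C_{\ref{lem:graphmatchings}}/(kn))^s$; a plain union bound over all $k^s$ tuples suffices, with no need to isolate the vertex-disjoint ones. Second, the case where several $a_i$ coincide requires no extra care once you drop the disjointness restriction --- the crude bound $|F_1\times\cdots\times F_s|=k^s$ is all you need.
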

\begin{proof}
    There exists an equipartition of $B$ as $B_1,\ldots, B_k$ such that each $G[A\cup B_i]$ ($i\in[k]$) is a graph with minimum degree at least $(98/100)n$. Indeed, a random partition of $B$ would have this property with high probability (as $n/k\to \infty$, see Lemma~\ref{lem:McDAppl}). Now, Lemma~\ref{lem:graphmatchings} gives us a random perfect matching $M_i$ in each $G[A\cup B_i]$ ($i\in[k]$), and $\bigcup_{i\in [k]}M_i=:M$ is a random $K_{1,k}$-perfect-matching of $G$. $M$ clearly has the desired spread with $C_{\ref*{cor:starmatchings}}=C_{\ref*{lem:graphmatchings}}$.
\end{proof}

\subsection{Proof of Theorem~\ref{thm:mainthm}}\label{sec:mainthm}
We actually prove a stronger version of Theorem~\ref{thm:mainthm} where the location of a root vertex is specified in advance (similar to Theorem~\ref{thm:rootedtree}) as we believe this stronger result could have further applications. The unrooted version, i.e. Theorem~\ref{thm:mainthm}, follows simply by choosing $t\in V(T)$ arbitrarily, and $v\in V(G)$ uniformly at random, setting $\phi(t)=v$, and using Theorem~\ref{thm:mainthm} to complete this to a full $O(1/n)$-spread embedding of $T$.

\begin{theorem}\label{lem:approximatetreeembeddding} Let $\frac 1n \ll \frac {1}{C_*} \ll\alpha,1/\Delta$. Let $G$ be a $n$-vertex graph with
  $\delta(G)\geq (1/2+\alpha)n$. Let $T$ be a tree on $n$ vertices, with
  $\Delta(T)\leq \Delta$. Let $t \in V(T)$ and let $v \in V(G)$. The, there exists a random embedding $\phi\colon T\to G$ such that $\phi(t) = v$ with probability 1 and $\phi$ restricted to $T-t$ is $\left(\frac{\textcolor{black}{C_*}}{n}\right)$-spread.
\end{theorem}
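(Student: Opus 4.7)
The plan is to execute the strategy outlined in Section~\ref{sec:overview}. Fix a hierarchy $1/n \ll 1/C_* \ll 1/C \ll 1/K \ll \eta \ll \alpha, 1/\Delta$. First, apply \cref{treedecomp} with $m := C$ to split $T$ into an edge-disjoint tree-splitting $(T_i)_{i \in [\ell]}$ with $|T_i| \in [C, 4C]$; relabel so that $t \in V(T_1)$, fix a bag-tree $B$ of the splitting rooted at bag $1$, and for each $i \geq 2$ let $p_i$ be the parent of $i$ in $B$ and $r_i \in V(T_i) \cap V(T_{p_i})$ the unique shared vertex. For each $c \in [C, 4C]$ let $b_c$ count the subtrees of size $c$ (possibly with a small multiplicative slack to make the subsequent application of \cref{lem:colourspread} valid).

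Second, apply \cref{lem:randompartition2} with these $b_c$ and target sizes $a_c := c - 1 - K$, obtaining a random partition $\mathcal R$ of a subset of $V(G) \setminus \{v\}$ together with an auxiliary graph $A'$ satisfying (B1)--(B3). Viewing $A'$ as vertex-coloured by its size classes $A'_c$, pick some $R^{(1)} \in A'_{|T_1|-1}$ into which $v$ has good back-degree (possible by combining (B1) and (A2) with $u = v$), and apply \cref{lem:colourspread} to the host $A'$ with root $R^{(1)}$ and the colouring of $B$ by $|T_i|$ to obtain a random embedding $\psi : B \to A'$ with $\psi(1) = R^{(1)}$ and with $\psi$ restricted to $B - 1$ being $O(C/n)$-spread. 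The near-completeness of $A'$ between colour classes coming from (B2) ensures the hypotheses of \cref{lem:colourspread} hold with parameters depending only on $C$ and $\alpha$.

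Third, convert $\psi$ into an embedding $\phi : T \to G$ in two stages. By the choice of $a_c$, the used parts $\psi(i)$ together cover $n - 1 - \ell K$ vertices, so exactly $K$ spare vertices must be added to each used part to reach size $|T_i| - 1$. By (A1), almost every (used part, spare vertex) pair is admissible in the sense that the spare vertex has good back-degree into the used part, so \cref{cor:starmatchings} with $k = K$ provides a random $K_{1,K}$-matching $M$ witnessing the reallocation. Write $R^+_i$ for the enlarged part assigned to $T_i$; by (B3) plus admissibility, $\delta(G[R^+_i]) \geq (1/2 + \alpha/3)|R^+_i|$, and the same holds for $G[R^+_i \cup \{u\}]$ for any $u \in R^+_{p_i}$ (or $u = v$ if $i = 1$). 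Now embed the $T_i$'s in BFS order along $B$ using \cref{thm:rootedtree}: for $T_1$, embed into $G[R^+_1 \cup \{v\}]$ with $t \mapsto v$; for $i \geq 2$, $r_i$ is already embedded to some $\phi(r_i) \in R^+_{p_i}$, and \cref{thm:rootedtree} embeds $T_i$ into $G[R^+_i \cup \{\phi(r_i)\}]$ with $r_i \mapsto \phi(r_i)$. At each step, pick the embedding uniformly at random among the non-empty set of valid rooted embeddings.

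For the spread bound, fix distinct $x_1, \ldots, x_s \in V(T) \setminus \{t\}$ and $y_1, \ldots, y_s \in V(G) \setminus \{v\}$, and for each $i$ let $j_i$ be the subtree index with $x_i \in V(T_{j_i})$. The event $\phi(x_i) = y_i$ forces $y_i \in R^+_{j_i}$. Combining the spreadness statement of \cref{lem:randompartition2} with the spread of $\psi$ from \cref{lem:colourspread} and that of $M$ from \cref{cor:starmatchings} via a chain-rule argument, the probability that every $y_i$ lands in its required enlarged part is at most $(C'/n)^s$ for some $C' = C'(C, K, \alpha)$. Conditional on $\mathcal R, \psi, M$, each independent uniform rooted subtree embedding (into a part of size $O(C)$) places $\phi(x_i) = y_i$ with probability $O(1)$ per $i$, giving $\mathbb P[\bigwedge_i \phi(x_i) = y_i] \leq (C_*/n)^s$ for suitable $C_* = C_*(C, K, \alpha, \Delta)$. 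The main obstacle is making the spread accounting precise: one must carefully align the bookkeeping between the four stochastic stages (partition, bag-tree embedding, reallocation matching, rooted subtree embedding), which in particular requires the reallocation of \cref{cor:starmatchings} to preserve a clean conditional independence structure relative to which the spread bounds of the preceding stages can be invoked.
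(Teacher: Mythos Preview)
Your outline follows the same four-step strategy as the paper (split $T$ via \cref{treedecomp}, randomly partition via \cref{lem:randompartition2}, embed the bag-tree into $A'$ via \cref{lem:colourspread}, pad the parts via \cref{cor:starmatchings}, and embed each subtree via \cref{thm:rootedtree}), and is essentially correct. The one structural difference worth flagging is how the bag-tree embedding is rooted. You root at the bag $T_1$ containing $t$ and choose some $R^{(1)}\in A'$ as its image; the paper instead \emph{adjoins a singleton atom} $T_*:=\{t\}$ to the tree-splitting and a singleton $R_*:=\{v\}$ to $A'$, and roots at $T_*\mapsto R_*$. The point of this device is that the spreadness of $\psi$ from \cref{lem:colourspread} then covers \emph{every} genuine bag $T_i$, $i\in[\ell]$, so that in the final chain-rule computation one can uniformly bound $\mathbb P\big[\bigwedge_i \psi(T(x_i))=h(i)\big]$ over all \emph{fixed} assignments $h$. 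In your version $\psi(1)=R^{(1)}$ is determined by $\mathcal R$, so for any $x_i\in T_1\setminus\{t\}$ the spreadness of $\psi$ contributes nothing, and the spreadness clause of \cref{lem:randompartition2} does not directly bound $\mathbb P[y_i\in R^{(1)}]$ either, because the function $f$ there must be fixed \emph{before} $\mathcal R$ is revealed whereas your $R^{(1)}$ depends on $\mathcal R$. This is fixable (for instance, set $R^{(1)}:=R^{1}_{|T_1|}$ with a deterministic label and condition on this specific part being good, which holds with probability $1-o(1)$), but as written your chain-rule sketch has a gap precisely at bag~$1$.

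One smaller point: in your Step~3 the admissibility relation for \cref{cor:starmatchings} must require a spare vertex $u$ to have good degree not only into its target part $R$ but also into every bag-tree neighbour of $R$ (this is how the paper defines the edges of its bipartite graph $H$). Otherwise a reallocated vertex landing in $R^+_{p_i}$ need not have the required degree into $R^+_i$, and the invocation of \cref{thm:rootedtree} at Step~4 is not justified for the root $r_i$ when $\phi(r_i)$ happens to be one of the $K$ added vertices.
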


\begin{remark}\label{rem:mainrem}
      All of the dependencies between the constants that arise from our proof are polynomial. However, we also need that $C_*$ is at least a polynomial function in $f(\alpha, \Delta)$, where $f$ is the function from Theorem~\ref{thm:rootedtree} that ultimately relies on \cite{kathapurkar2022spanning}. As \cite{kathapurkar2022spanning} does not cite an explicit bound (though their proof does not use the Szemer\'edi regularity lemma), we also formulate Theorem~\ref{lem:approximatetreeembeddding} in the above imprecise form.
\end{remark}

\begin{proof}[Proof of Theorem~\ref{lem:approximatetreeembeddding}]
  Let $C$ \aside{$C$} be a new constant such that $1/n \ll 1/C_*  \ll 1/C \ll \alpha,1/\Delta$. Let $(T_i)_{i \in [\ell]}$ be a tree-splitting of $T$ 
  obtained by \cref{treedecomp} applied with $m := C$. Notice that adding $T_*:=\{t\}$ \aside{$T_*$} to this tree-splitting produces another tree-splitting. Let $T'$ be a bag-tree of $(T_i)_{i \in [\ell]} \cup \{T_*\}$ rooted in $T_*$. Note that $T'$ has maximum degree $4C\Delta$. \aside{$T'$}

\paragraph{Step 1: Randomly partition of $V(G) \setminus \{v\}$.}

We assign to each subtree $T_i$ a colour that corresponds to its size. Formally,
define the colouring \aside{$f$} $f: V(T') \setminus \{T_*\} \to [C,4C]$ via $f(T_i) := |T_i|$ for each
subtree $T_i$. Fix an integer $K$ \aside{$K, a_c,b_c$} such that $1/C \ll 1/K \ll \alpha$. For each colour $c \in [C,4C]$, let $a_c := c -1 - K$ and $b_c := |f^{-1}(c)| +  \left\lfloor \frac{K}{32C^3} n \right\rfloor$.

We use \cref{lem:randompartition2} on $G$ with the
following parameters, $(a_i)_{i\in[k]} := (a_c)_{C \le c \le 4C}$,
$(b_i)_{i \in [k]} := (b_c)_{C \le c \le 4C}$, $\delta := \frac12$, $K := K+1$,
and $C, \alpha, v:= C,\alpha, v$.  To do so, we need only need to check
that $\sum_c a_cb_c < n$, as the other conditions follow directly from
our choice of constants. Observe
\begin{align*}
  \sum_{c=C}^{4C} a_cb_c = \sum_{i = 1 }^\ell (|T_i| - 1 - K) + \left\lfloor \frac{K}{32C^3} n \right\rfloor \sum_{c=
                  C}^{4C}(c-1-K)
                < |T| - \ell K +  \left\lfloor \frac{K}{32C^3} n \right\rfloor \frac{5C(3C+1)}{2}
                &\leq n - \frac{K}{4C}n + \frac{K}{4C}n.
\end{align*}

We can thus obtain a
partition $\mathcal{R} = (R_c^j)_{C \le c \le 4C, j \in [b_c]}$  of a subset of
$V(G) \setminus \{v\}$ and an auxiliary graph $A'$ whose vertex set $V(A')$ is a subset of
$ \mathcal{R}$, satisfying the conditions listed in \cref{lem:randompartition2}. Add to $A'$ a vertex \aside{$R_*$} $R_* = \{v\}$ adjacent to all $R \in \mathcal{R}$ such that $\delta(G[R+v]) \geq \left(\frac12 + \frac\alpha2 \right)|R+v|$. For
each colour $c \in [C, 4C]$, let us denote by $V_c := \{R_c^j \mid j \in [b_c] \} \cap V(A')$. We
define the colouring \aside{$V_c, g$} $g:V(A') \mapsto [C,4C]$ that associates the colour $c$ to all
parts in $V_c$. Formally, $\forall c \in[C,4C], \forall R \in V_c, g(R) = c$. 

\paragraph{Step 2: Construct $\psi\colon T'\to A'$.}
Conditional on a fixed outcome of $\mathcal{R}$ (and thus, $A'$), we describe \aside{$\psi$} $\psi$, which is a random embedding of $T'$ on $A'$.
We apply \cref{lem:colourspread} to $A'$ with partition $V_C\cup \ldots \cup V_{4C}$
and $T'$ coloured by $f$, with parameters $t := T_*$, $v := R_*$, $\gamma := e^{-\alpha^2 C/12}$ \aside{$\gamma,\eta$} and
$\eta := \left\lfloor \frac{K}{32C^3}\right\rfloor$ to obtain a random embedding $\psi$. To apply the lemma, we need
the following conditions to be satisfied:
  \begin{itemize}
      \item $1/n \ll \gamma = e^{-\alpha^2C/12} \ll \eta =
        \Theta\left(\frac{K}{C^3}\right)$,
      \item for all $i\in [C;4C]$, for all $u \in V(A')$, $\deg(u,V_i) \geq (1 - \gamma) |V_i|$,
      \item for all $c$, the number of subtrees coloured $c$ is at most $(1-\eta)|V_c|$.
  \end{itemize} 
  The first condition is satisfied by our constant hierarchy. Our choice of $\gamma$ and condition \ref{cond:B1} of \cref{lem:randompartition2} is tailored so that $V(A')$ satisfies the second
  condition. The third condition is less direct. Let $n_c$ be the number of subtrees
  coloured $c$ in $T'$, what we aim to show is $n_c \leq (1 - \eta)|V_c|$. By Condition~\ref{cond:B1} of \cref{lem:randompartition2} and the definition of $b_c$, we have that
  $
  |V_c| \ge (1-e^{-\frac{\alpha^2C}{12}})(n_c + \eta n)
  $. Hence,
  \begin{align*}
      (1 - \eta)|V_c| &\geq (1 - \eta)(1 - e^{-\tfrac{\alpha^2C}{12}})(n_c + \eta n) 
      \geq (1 - \eta - e^{-\tfrac{\alpha^2C}{12}})(n_c + \eta n) 
      \geq (1 - 2\eta)(1 + \frac{\eta n}{n_c})n_c \\
      &\geq (1 - 2\eta)(1 + C\eta)n_c \geq n_c
  \end{align*}
  where we used that $e^{-\tfrac{\alpha^2C}{12}} \ll \eta = \Theta\left(\frac{K}{C^3}\right)$, and for the last step that $n_c\leq \frac{n}{C}$ and $C\geq 4$.

  Recall that $\psi: V(T') \mapsto V(A')$ denotes the random embedding obtained
  from \cref{lem:colourspread}. By construction, $\psi$ restricted to $T' \setminus \{T_*\}$ is $(\frac{\textcolor{black}{{2^{15}C^6}}}{n})$-spread, and note that this spreadness condition holds independently of the values of $\mathcal{R}$ and $A'$ that we condition on. \cref{lem:colourspread} also ensures that with probability $1$, $\psi$ preserves the
  colouring given by $g$, i.e.  $\forall i \in [\ell], f(T_i) = g(\psi(T_i))$.

\paragraph{Step 3: Adjust the size of the bags.}
    In this step, we describe how to obtain a randomised partition $\mathcal{M}$ of $V(G)$. We define $\mathcal{M}$ conditional on fixed values of $\mathcal{R}$ and $\psi$. Informally, our goal is to build, for all $R_c^j \in \Ima(\psi)$, a set $M_c^j$ satisfying $R_c^j \subseteq M_c^j$ and $|M_c^j| = c-1$ while preserving the minimum degree condition given by \cref{lem:randompartition2} for the set $\Ima(\psi)$ and the edges induced by $\psi$. Formally, defining \aside{$N(R_c^j)$} $N(R_c^j) = \{ R \in \Ima(\psi) | \{\psi^{-1}(R_c^j),\psi^{-1}(R)\} \in E(T')\}$, we want $\mathcal{M}$ to satisfy the following three properties:
\begin{enumerate}[label ={{{\textbf{C\arabic{enumi}}}}}]
    \item \label{cond:C1} $\forall R_c^j \in \Ima(\psi)$, $|M_c^j| = c-1$;
    \item \label{cond:C2} $\forall R_c^j \in \Ima(\psi)$, $\delta(G[M_c^j]) \geq \left(\frac12 + \frac\alpha3 \right)|M_c^j|$;
    \item \label{cond:C3} $\forall R_c^j \in \Ima(\psi)$, $\forall R_{c'}^{j'} \in N(R_c^j)$, $\forall v \in M_c^j$, $\delta\left(G\left[M_{c'}^{j'} + v\right]\right) \geq \left(\frac12 + \frac{\alpha}{3}\right)|M_{c'}^{j'} +v|$.
\end{enumerate}

Consider the following bipartite graph \aside{$H,A,B$} $H$ with bipartition $(A,B)$ where $B = V(G) \setminus \left( \{t \} \cup \cup_{R\in\mathcal{R}} R\right)$ and $A=\mathcal{R}$. Put an edge between $v \in A$ and $R \in B$ if $\forall R' \in \{R\} \cup N(R)$, $\delta(G[R'+v]) \geq \left(\frac12 + \frac\alpha 2 \right)|R'+v|$. Note that, for all $(a,b) \in A \times B$, \ref{cond:A1} and \ref{cond:A2} imply that $d(a,B) \geq (1 -(4\Delta C +1)3e^{-\frac{\alpha^2C}{10}})|B| \geq \frac{99}{100}|B|$ and that $d(b,A) \geq (1 -(4\Delta C +1)3e^{-\frac{\alpha^2C}{10}}) \geq \frac{99}{100}|B|$.
Therefore, we may use \cref{cor:starmatchings} on $H$ with $k := K$, to associate to each $R_c^j$ a disjoint random set of $K$ elements of $B$, satisfying the spreadness property stated in the lemma (regardless of the value of $\mathcal{R}$ and $\psi$ that is being conditioned upon). Consider \aside{$L_c^j$} $L_c^j \subseteq A$, the set of random vertices that are matched by the $K_{1,K}$-perfect-matching to $R_c^j$, and define \aside{$M_c^j$} $M_c^j:= R_c^j \cup L_c^j$. Note \ref{cond:C1} is then directly satisfied. The definition of an edge in $H$
and the fact that $K/C \ll \alpha$ imply that \ref{cond:C2} and \ref{cond:C3} are also both satisfied. We define \aside{$\mathcal{M}$} $\mathcal{M}:=\bigcup_{c,j} \{M_c^j\}$. 
\par Having defined the random variable $\mathcal{M}$, we now show the following spreadness property. To clarify, $\psi$ and $\mathcal{R}$ are not considered to be fixed anymore.
\begin{claim}
    \label{cl:u_spread}
    For any $s \in \mathbb{N}$ and any function $h : \{v_1,\dots,v_s\} \mapsto \mathcal{M}$ where $\{v_1,\dots,v_s\} \subseteq V(G)$, we have $\P\left[\wedge_{i=1}^s v_i \in h(v_i)\right] \leq (\frac{\textcolor{black}{12C \cdot C_{\ref{cor:starmatchings}}}}{n})^s$.
\end{claim}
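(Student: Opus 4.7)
My plan is to condition on the values of $\mathcal R$ and $\psi$ and exploit the disjoint decomposition $M_c^j = R_c^j \sqcup L_c^j$ to split the target event. Writing $h(v_i) = M_{c_i}^{j_i}$, the event $v_i \in h(v_i)$ is the disjoint union of ``$v_i \in R_{c_i}^{j_i}$'' (``$R$-type'') and ``$v_i \in L_{c_i}^{j_i}$'' (``$L$-type''), so I would sum over subsets $S \subseteq [s]$ indexing the $R$-type indices. This reduces the proof to bounding the contribution from each $S$ by using the two spreadness statements already available: that of the partition $\mathcal R$ for $i \in S$, and that of the random $K_{1,K}$-matching for $i \notin S$.

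For the $R$-type indices $i \in S$, the event $v_i \in R_{c_i}^{j_i}$ depends only on $\mathcal R$, so the final spreadness conclusion of \cref{lem:randompartition2} (applied with $u_i := v_i$ and $f(u_i) := R_{c_i}^{j_i}$) immediately yields probability at most $(12C/n)^{|S|}$. For the $L$-type indices $i \notin S$, I would further condition on any outcome of $(\mathcal R, \psi)$ compatible with the $R$-type event; this determines the bipartite graph $H$ of Step~3, and the event ``$v_i \in L_{c_i}^{j_i}$ for all $i \notin S$'' becomes the event that $s-|S|$ prescribed edges of $H$ all lie in the random $K_{1,K}$-matching. The crucial observation is that the spreadness guarantee of \cref{cor:starmatchings} holds for the matching \emph{regardless} of the host graph $H$ (as stressed when $\mathcal M$ is constructed in Step~3), so this conditional probability is at most $(C_{\ref*{cor:starmatchings}}/|\mathcal R|)^{s-|S|}$.

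A short auxiliary estimate is then needed to convert the denominator $|\mathcal R|$ into $n$. Using the tree-splitting $(T_i)_{i\in[\ell]}$ of $T$ with $|T_i| \le 4C$ and the identity $\sum_i |T_i| = \sum_i e(T_i) + \ell = (n-1) + \ell$, I obtain $\ell \ge (n-1)/(4C-1) \ge n/(4C)$; since $b_c \ge |f^{-1}(c)|$ for every colour $c$, this gives $|\mathcal R| = \sum_c b_c \ge \ell \ge n/(4C)$, and hence each $L$-type factor is at most $4CC_{\ref*{cor:starmatchings}}/n$. Summing over $S \subseteq [s]$ and applying the binomial theorem yields
\[
\P\!\left[\bigwedge_{i=1}^{s} v_i \in h(v_i)\right] \le \sum_{S \subseteq [s]} \left(\frac{12C}{n}\right)^{|S|}\!\left(\frac{4CC_{\ref*{cor:starmatchings}}}{n}\right)^{s-|S|} = \left(\frac{12C + 4CC_{\ref*{cor:starmatchings}}}{n}\right)^{\!s} \le \left(\frac{12CC_{\ref*{cor:starmatchings}}}{n}\right)^{\!s},
\]
where the last inequality holds provided $C_{\ref*{cor:starmatchings}} \ge 2$, a condition we may freely impose since it is an absolute constant.

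The main obstacle is philosophical rather than technical: the two sources of randomness (the partition $\mathcal R$ and the $K_{1,K}$-matching) are coupled, since the matching is built on a graph $H$ determined by $\mathcal R$ and $\psi$. The argument only goes through because the spreadness in \cref{cor:starmatchings} is \emph{uniform over all admissible host graphs}, allowing one to apply it after conditioning. Once that point is accepted, the disjoint decomposition and binomial sum are routine.
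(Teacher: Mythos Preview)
Your proof is correct, and it rests on the same two-source decomposition as the paper's: each event $v_i\in M_{c_i}^{j_i}$ is split into $v_i\in R_{c_i}^{j_i}$ (controlled by the spreadness of $\mathcal R$ from \cref{lem:randompartition2}) and $v_i\in L_{c_i}^{j_i}$ (controlled by the star-matching spreadness from \cref{cor:starmatchings}).

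The difference is purely in the bookkeeping. The paper partitions the $v_i$ once, according to whether they lie in the leftover set $B$ or in $\bigcup_R R$, and writes a single chain-rule factorisation $(12C/n)^{s_2}(C_{\ref*{cor:starmatchings}}/n)^{s_1}$. You instead expand $\bigwedge_i\bigl(v_i\in R_{c_i}^{j_i}\ \vee\ v_i\in L_{c_i}^{j_i}\bigr)$ over all $2^s$ subsets $S$, bound each summand by conditioning on $(\mathcal R,\psi)$, and collapse the sum with the binomial theorem. Your route is a little longer but entirely self-contained: it makes transparent why the two spreadness bounds multiply, and it forces you to justify the auxiliary estimate $|\mathcal R|\ge n/(4C)$ (via $\ell\ge (n-1)/(4C-1)$) needed to convert the $C_{\ref*{cor:starmatchings}}/|\mathcal R|$ factor from \cref{cor:starmatchings} into an $O(1)/n$ factor---a step the paper absorbs without comment. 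The final inequality $12C+4CC_{\ref*{cor:starmatchings}}\le 12CC_{\ref*{cor:starmatchings}}$ (equivalently $C_{\ref*{cor:starmatchings}}\ge 3/2$) is harmless since $C_{\ref*{cor:starmatchings}}$ is an absolute constant we are free to enlarge.
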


\begin{proof}
    Note that if $v \in \{v_1,\ldots,v_s\}$ then $\P[v \in h(v)] = 0$. Suppose this is not the case, and let us partition $\{v_1,\ldots,v_s\}$ into $\{x_1,\ldots,x_{s_1}\} \subseteq A$ and $\{y_1,\ldots,y_{s_2}\} \subseteq V(G) \setminus (A \cup \{v\})$. Observe that
    $$
        \P\left[\bigwedge_{i=1}^s v_i \in h(v_i)\right] = \P\left[\bigwedge_{i=1}^{s_2} y_i \in h(y_i)\right] \P\left[\bigwedge_{i=1}^{s_1} x_i \in h(x_i) \, \middle| \, \bigwedge_{i=1}^{s_2} y_i \in h(y_i)\right]
        \leq \left(\frac{12C}{n}\right)^{s_2} \left(\frac{C_{\ref{cor:starmatchings}}}{n}\right)^{s_1},$$
where we used the spreadness property from Lemma~\ref{lem:randompartition2} and Corollary~\ref{cor:starmatchings} in the last step.
        \end{proof}

\paragraph{Step 4: Embed the subtrees.} From now on, we redefine $g$ and $\psi$ as being maps to $\mathcal{M} \cup \{R_*\}$ (by composing $g$ and $\psi$ with the natural bijection $\mathcal{R} \mapsto \mathcal{M} \cup \{R_*\}$ that associates $M_c^j$ to $R_c^j$, and $R_*$ to itself). 

We fix $\phi(t):= v$, by doing so we embed $T_*$ into $R_*$.
The goal is now to embed each $T_i$ in $\psi(T_i) \in \mathcal{M}$. Note that $|\psi(T_i)| = |T_i| - 1$ for all $i \in [\ell]$ (by \ref{cond:C1}). We define $\phi$ as follows: While there exists a subtree $T_i$ that is not fully embedded into $G$, pick a subtree $T_i$
that has exactly one vertex $t_i$ already embedded say $\phi(t_i) = v_i$ and
apply~\cref{thm:rootedtree} to embed the rest of $T_i$ in $\psi(T_i)$. We can
use \cref{thm:rootedtree}, due to \ref{cond:C1}, \ref{cond:C2} and \ref{cond:C3}. This procedure is well defined because $T'$ is a tree. Let us define the \emph{native atom} of a vertex $y \in V(T)$, denoted by \aside{$T(y)$} $T(y)$, to be
the first $T_i$ that contains $y$.

\textbf{Checking spreadness.} We now prove that the random embedding $\phi$ constructed this way is $\left(\frac{\textcolor{black}{C_*}}{n}\right)$-spread. The spreadness of this embedding comes from two different randomness sources: the partition $\mathcal{M}$ via \cref{cl:u_spread}, and the random embedding $\psi$ via \cref{lem:colourspread}. 
 
Let us fix two sequences of distinct elements $y_1, \dots, y_s \in V(G - v)$ and $x_1, \dots x_s \in V(T-t)$. Let $b := |\{T(x_i) \mid i \in [s]\}|$. We may suppose, up to reordering, that $x_1,\dots,x_b$ each have distinct native atoms, this way we have $\{T_{x_1},\ldots,T_{x_b}\} = \{T_{x_1},\ldots,T_{x_s}\}$. Let us split our probability on the two sources of spreadness as follows. Set $C_0:=\textcolor{black}{12C \cdot C_{\ref{cor:starmatchings}}}$.

\begin{align*}
\P\left[\bigwedge_{i=1}^s \phi(x_i) = y_i\right] &= \sum_{h : [b] \mapsto \mathcal{M}} \P\left[ \bigwedge_{i=1}^s \phi(x_i) = y_i \middle| \bigwedge_{i=1}^s y_i \in  h(i) \right] \cdot \P\left[\bigwedge_{i=1}^s y_i \in h(i) \right]\\
&\leq \sum_{h : [b] \mapsto \mathcal{M}} \P\left[ \bigwedge_{i=1}^s \phi(x_i) = y_i \middle| \bigwedge_{i=1}^s y_i \in  h(i) \right] \cdot \left(\frac{C_0}{n}\right)^s && \text{\small by \cref{cl:u_spread}} \\
& \leq \sum_{h : [b] \mapsto \mathcal{M}} \P\left[\bigwedge_{i=1}^b \psi(T(x_i)) = h(i)\right] \cdot \left(\frac{C_0}{n}\right)^s \\
& \leq \sum_{h : [b] \mapsto \mathcal{M}} \left(\frac{2C^2}{\eta^2n}\right)^b \left(\frac{C_0}{n}\right)^s \leq |\mathcal{M}|^b \left(\frac{2C^2}{\eta^2n}\right)^b \left(\frac{C_0}{n}\right)^s \leq n^b \left(\frac{2C^2}{\eta^2n}\right)^b \left(\frac{C_0}{n}\right)^s && \text{\small by \cref{lem:randompartition2}}\\
& \leq \left(\frac{2C^2C_0}{\eta^2n}\right)^s \leq \left(\frac{\textcolor{black}{C_*}}{n}\right)^s && \text{\small $b\leq s$} 
\end{align*}

To justify the second inequality, it is sufficient to observe that $\phi(x_i) = y_i$ only if $ \psi(T(x_i)) = h(i)$. Moreover by the remark made above, $T(x_1),\dots,T(x_b)$ are all distinct, so we can indeed invoke the spreadness property of $\psi$ coming from \cref{lem:colourspread}.

\end{proof}

\bibliographystyle{amsabbrv}
\bibliography{ref}

\appendix

\section{Random partitions}\label{app:b}

Before proving \cref{lem:randompartition2}, we state three
elementary concentration bounds that we will need. 

\begin{lemma}[Chernoff bound \cite{chernoff1952measure}]\label{chernoff} Let $X:=\sum_{i=1}^m X_i$ where $(X_i)_{i\in[m]}$ is a sequence of independent indicator random variables, and let $\Expect{X}=\mu$. For every $\gamma \in (0, 1)$, we have $\Prob{|X-\mu|\geq \gamma \mu}\leq 2e^{-\mu \gamma^2/3}$.
\end{lemma}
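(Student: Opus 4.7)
The plan is to prove the Chernoff bound via the classical exponential moment method (Bernstein's trick), handling the upper tail $\Prob{X-\mu\geq \gamma\mu}$ and lower tail $\Prob{X-\mu\leq -\gamma\mu}$ separately and then combining them with a union bound to produce the factor of $2$ in the final estimate.

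For the upper tail, I would fix an arbitrary parameter $t>0$ and apply Markov's inequality to the non-negative random variable $e^{tX}$ to obtain
\[
\Prob{X\geq (1+\gamma)\mu} = \Prob{e^{tX}\geq e^{t(1+\gamma)\mu}} \leq e^{-t(1+\gamma)\mu}\,\Expect{e^{tX}}.
\]
By independence of the $X_i$, the moment generating function factorises as $\Expect{e^{tX}}=\prod_{i=1}^m \Expect{e^{tX_i}}$. Writing $p_i:=\Prob{X_i=1}$, each factor satisfies $\Expect{e^{tX_i}}=1+p_i(e^t-1)\leq \exp(p_i(e^t-1))$ via the inequality $1+x\leq e^x$. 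Summing the exponents gives $\Expect{e^{tX}}\leq \exp(\mu(e^t-1))$, so
\[
\Prob{X\geq (1+\gamma)\mu} \leq \exp\bigl(\mu(e^t-1) - t(1+\gamma)\mu\bigr).
\]
Optimising by choosing $t:=\ln(1+\gamma)$ yields the standard bound $\bigl((e^{\gamma}/(1+\gamma)^{1+\gamma})\bigr)^{\mu}$, and an elementary calculus estimate (expanding $\ln(1+\gamma)$ via its Taylor series and using $\gamma\in(0,1)$) shows this is at most $e^{-\mu\gamma^2/3}$.

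For the lower tail, the argument is symmetric but with $t<0$: Markov's inequality applied to $e^{-tX}$ with $t>0$ gives $\Prob{X\leq (1-\gamma)\mu}\leq e^{t(1-\gamma)\mu}\Expect{e^{-tX}}$, and the same factorisation together with $\Expect{e^{-tX_i}}\leq \exp(p_i(e^{-t}-1))$ leads to $\exp(\mu(e^{-t}-1)+t(1-\gamma)\mu)$. Choosing $t:=-\ln(1-\gamma)$ and again estimating via Taylor expansion of the logarithm yields the bound $e^{-\mu\gamma^2/2}\leq e^{-\mu\gamma^2/3}$. Taking a union bound over the two tails produces the factor of $2$ and completes the proof.

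The proof has no genuine obstacle; the only mildly technical point is the calculus step justifying that $\mu\bigl(e^t-1-t(1+\gamma)\bigr)\leq -\mu\gamma^2/3$ at the optimal $t=\ln(1+\gamma)$ for $\gamma\in(0,1)$, which is handled by noting $(1+\gamma)\ln(1+\gamma)-\gamma\geq \gamma^2/3$ (verified by differentiating both sides, or by direct comparison of Taylor coefficients on the interval $(0,1)$). Since this is a well-known result quoted for use in Appendix~B, a short exposition of the above chain of inequalities suffices.
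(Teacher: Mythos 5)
Your proposal is correct: the exponential moment method with $t=\ln(1+\gamma)$ for the upper tail and $t=-\ln(1-\gamma)$ for the lower tail, the bound $\Expect{e^{tX_i}}\leq \exp(p_i(e^t-1))$, the elementary inequalities $(1+\gamma)\ln(1+\gamma)-\gamma\geq \gamma^2/3$ and $\gamma+(1-\gamma)\ln(1-\gamma)\geq \gamma^2/2$ for $\gamma\in(0,1)$, and a final union bound giving the factor $2$ all go through exactly as you describe. Note, however, that the paper gives no proof of this lemma at all: it is quoted as a classical result of Chernoff with a citation, so there is no argument in the paper to compare against; your write-up is simply the standard textbook derivation, and it is sound.
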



\begin{lemma}[\cite{gupta2023general}, Lemma 3.5]\label{lem:McDAppl} Let $\ell \in \mathbb{N}$, $0<\delta' <\delta< 1$ and $1/n, 1/\ell \ll \delta-\delta'$.
Let $G$ be a $n$-vertex graph with $\delta(G) \geq \delta n$. If $A\subseteq
V(G)$ is a vertex set of size $\ell$ chosen uniformly at random, then for every
$v \in V(G)$ we have 

\begin{equation*}
\Prob{\deg(v,A) < \delta' \ell}\leq 2 \exp (-\ell(\delta - \delta')^2/2).
\end{equation*}
\end{lemma}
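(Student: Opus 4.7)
The plan is to recognise $\deg(v,A)$ as a hypergeometric random variable and then invoke a standard left-tail concentration inequality. Writing $d := |N(v)| \ge \delta n$, the random variable $X := \deg(v,A) = |N(v) \cap A|$ is distributed as $\mathrm{Hyp}(n,d,\ell)$, with mean $\mathbb{E}[X] = \ell d/n \ge \delta \ell$. So the task reduces to bounding the left tail of a hypergeometric with mean at least $\delta \ell$ at the threshold $\delta' \ell$, a deviation of at least $(\delta - \delta')\ell$.

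The main step is then to apply a Chernoff--Hoeffding bound for the hypergeometric distribution, which is at least as strong as the corresponding bound for the binomial (the classical fact that sampling without replacement is no less concentrated than sampling with replacement). Setting the multiplicative deviation parameter $\gamma := (\delta - \delta')/\delta$ gives $(1-\gamma)\mathbb{E}[X] \ge \delta' \ell$, so the multiplicative left-tail bound $\mathbb{P}[X \le (1-\gamma)\mathbb{E}[X]] \le \exp(-\gamma^2 \mathbb{E}[X]/2)$ combined with $\mathbb{E}[X] \ge \delta \ell$ and $\delta \le 1$ would deliver $\mathbb{P}[X < \delta' \ell] \le \exp(-\ell(\delta-\delta')^2/(2\delta)) \le \exp(-\ell(\delta-\delta')^2/2)$. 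The leading factor of $2$ in the stated bound provides ample slack for any corner cases (e.g.\ rounding in the definition of $\ell$).

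A clean alternative, sidestepping the need to cite a specialised hypergeometric tail bound by name, is to realise $A$ as the image of the first $\ell$ positions of a uniformly random permutation $\pi$ of $V(G)$ and apply McDiarmid's bounded-differences inequality for permutations (equivalently, Hoeffding's inequality for sampling without replacement) to the statistic $\pi \mapsto |N(v) \cap \{\pi(1),\ldots,\pi(\ell)\}|$: transposing two coordinates of $\pi$ alters this quantity by at most $1$, so the resulting sub-Gaussian concentration, with variance proxy $O(\ell)$, immediately yields a bound of the required form. The hierarchy $1/n, 1/\ell \ll \delta - \delta'$ ensures the exponent $\ell(\delta-\delta')^2$ is large enough for the bound to be meaningful.

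The only real ``obstacle'' is simply selecting the preferred off-the-shelf concentration tool; once chosen, the proof collapses to a single line of arithmetic. I would favour either route, but the hypergeometric Chernoff is arguably the most direct match for the stated exponent.
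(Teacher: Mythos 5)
Your main route is correct, and in fact this lemma is one the paper does not prove at all: it is imported verbatim from \cite{gupta2023general} (their Lemma~3.5) and used as a black box, so there is no internal argument to compare against. Your hypergeometric observation is exactly the right framing: $X=\deg(v,A)=|N(v)\cap A|$ is $\mathrm{Hyp}(n,d,\ell)$ with $d\ge \delta n$, hence $\mathbb{E}[X]=\ell d/n\ge\delta\ell$, and since sampling without replacement is dominated (in the exponential-moment sense, by Hoeffding's 1963 comparison) by the binomial, the multiplicative Chernoff lower tail applies; with $\gamma=(\delta-\delta')/\delta$ one checks $(1-\gamma)\mathbb{E}[X]\ge\delta'\ell$, and the arithmetic you give indeed yields $\mathbb{P}[X<\delta'\ell]\le \exp\bigl(-\ell(\delta-\delta')^2/(2\delta)\bigr)\le \exp\bigl(-\ell(\delta-\delta')^2/2\bigr)$, which is even marginally stronger than the stated bound (the factor $2$ and the hierarchy $1/n,1/\ell\ll\delta-\delta'$ are not needed for this route). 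One caution on your alternative route: a generic bounded-differences/McDiarmid inequality over the whole random permutation has variance proxy of order $n$ (the number of coordinates), not $\ell$, which is useless when $\ell=o(n)$. To legitimately get a variance proxy $O(\ell)$ you should either run Azuma--Hoeffding on the Doob martingale that exposes only the $\ell$ sampled positions (equivalently, Hoeffding for sampling without replacement), which incidentally reproduces the exact constants $2\exp(-\ell(\delta-\delta')^2/2)$ of the statement, or use a certificate-based permutation inequality such as Lemma~\ref{lem:mcdiarmidperm} from the appendix, at the cost of worse constants that must then be absorbed using the hierarchy. As written, ``transposing two coordinates changes $X$ by at most $1$'' alone does not justify the $O(\ell)$ proxy; with that repair, either route is fine, and your preferred hypergeometric Chernoff argument is the cleanest.
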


Finally, we need the following result due to McDiarmid, which appears in the textbook of Molloy and Reed~\cite[Chapter 16.2]{MR02}.
Here, a \textit{choice} is the position that a particular element gets mapped to in a permutation.
\begin{lemma}[McDiarmid's inequality for random permutations]\label{lem:mcdiarmidperm}Let $X$ be a non-negative random variable determined by a uniformly sampled random permutation $\pi$ of $[n]$ such that the following holds for some $c,r>0$:
\begin{enumerate}
    \item Interchanging two elements of $\pi$ can affect the value of $X$ by at most $c$
    \item For any $s$, if $X\geq s$ then there is a set of at most $rs$ choices whose outcomes certify that $X\geq s$.
\end{enumerate}
Then, for any $0\leq t\leq \mathbb{E}[X]$,
 
$$\mathbb{P}(|X-\mathbb{E}[X]|\geq t + 60c\sqrt{r\mathbb{E}[X]})\leq 4\exp(-t^2/(8c^2r\mathbb{E}[X])).$$
    
\end{lemma}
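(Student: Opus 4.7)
The plan is to prove this by the classical Doob-martingale method, with a refined quadratic-variation bound that exploits the certificate condition (2); condition (1) alone only yields the crude Azuma bound with exponent $-t^{2}/(2nc^{2})$, which is much too weak.

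First I would expose the permutation one coordinate at a time. Let $\mathcal{F}_{i} = \sigma(\pi(1), \ldots, \pi(i))$ and define the Doob martingale $Z_{i} = \mathbb{E}[X \mid \mathcal{F}_{i}]$, so that $Z_{0} = \mathbb{E}[X]$, $Z_{n} = X$, and $D_{i} := Z_{i} - Z_{i-1}$. A standard coupling argument for uniform random permutations (couple two completions of $(\pi(1), \ldots, \pi(i-1))$ that agree off of two positions $\geq i$, so that one is obtained from the other by a single transposition) together with condition (1) yields $|D_{i}| \leq c$ almost surely.

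Next, the goal is to control the predictable quadratic variation $V := \sum_{i=1}^{n} \mathbb{E}[D_{i}^{2} \mid \mathcal{F}_{i-1}]$ by using condition (2). Intuitively, only coordinates lying in some minimum-size certifying set (of size at most $rX$) can produce a nonzero $D_{i}$: swapping $\pi(i)$ with a later position cannot change $X$ unless at least one of those positions is in a certifying set. I would make this precise by constructing, for each realisation, an $\mathcal{F}_{i-1}$-measurable upper bound $V_{i} \geq \mathbb{E}[D_{i}^{2} \mid \mathcal{F}_{i-1}]$ such that $\sum_{i} V_{i} \leq 2c^{2} r X$ pathwise, by averaging over the completions of $\pi$ and using the certificate size bound at each one.

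With this pathwise variance bound in hand, I would apply a Freedman/Bernstein-type inequality for martingales with bounded increments: if $|D_{i}| \leq c$ and $V \leq \sigma^{2}$, then
\[
\mathbb{P}\bigl(|X - \mathbb{E}[X]| \geq u,\ V \leq \sigma^{2}\bigr) \leq 2\exp\!\left(-\frac{u^{2}}{2\sigma^{2} + 2cu/3}\right).
\]
Since $\sigma^{2}$ really wants to be proportional to $X$, I would split on the event $\mathcal{E} := \{X \leq \mathbb{E}[X] + t + 60 c\sqrt{r\mathbb{E}[X]}\}$. On $\mathcal{E}$ we may substitute $\sigma^{2} = O(c^{2} r \mathbb{E}[X]) + O(c^{2} r t) + O(c^{3} r^{3/2} \sqrt{\mathbb{E}[X]})$ into the Bernstein bound; on the complement the advertised deviation is already realised and yields a symmetric contribution. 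Choosing $u = t + 60c\sqrt{r\mathbb{E}[X]}$ and expanding $(a+b)^{2} \geq a^{2}$, $\sqrt{a+b} \leq \sqrt{a} + \sqrt{b}$, one checks by a direct computation that the additive term $60c\sqrt{r\mathbb{E}[X]}$ absorbs precisely the cross terms, leaving the cleaner exponent $-t^{2}/(8 c^{2} r \mathbb{E}[X])$ with a factor of $4$ out front from the union over the two tails and the two regimes.

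The main obstacle will be the conditional variance bound: the certifying set is itself a function of the random outcome, and so is $X$, so the desired bound $\sum_{i} \mathbb{E}[D_{i}^{2} \mid \mathcal{F}_{i-1}] \lesssim c^{2} r X$ is not a genuine $\mathcal{F}_{i-1}$-measurable inequality but a pathwise statement on $V$. I would handle this by working with a deterministic domination: enumerate all permutations $\pi'$ consistent with $\mathcal{F}_{i-1}$, assign to each its own certifying set $S(\pi')$ of size $\leq r X(\pi')$, and observe that $D_{i}(\pi') \neq 0$ forces position $i$ (or the position with which it is eventually swapped) to lie in $S(\pi')$; summing over $i$ recovers a multiple of $r X$. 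Once this combinatorial lemma is in place, the rest of the argument is a routine martingale computation.
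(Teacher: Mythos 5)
You should first note that the paper does not prove this lemma at all: it is quoted as a black-box result of McDiarmid from the textbook of Molloy and Reed, whose proof goes through a Talagrand-type convex-distance/isoperimetric inequality for the symmetric group, concentrating $X$ around its \emph{median} and then bounding the median–mean gap --- that is exactly where the additive term $60c\sqrt{r\mathbb{E}[X]}$ in the statement comes from. So any correct argument you give is necessarily a different route from the source; the question is whether your martingale route works.

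As written it has a genuine gap at its central step, the pathwise bound $V=\sum_i \mathbb{E}[D_i^2\mid\mathcal{F}_{i-1}]\leq 2c^2rX$. First, the certificate in condition (2) only certifies the \emph{lower} bound $X\geq s$: a permutation agreeing with $\pi$ on a certifying set of size $r X(\pi)$ has value at least $X(\pi)$, but swaps outside the certificate can still change $X$ (upwards, and the certificate of the swapped permutation is a different set), so the claim that $D_i\neq 0$ forces position $i$ into a certifying set is false; the increments $D_i$ are averages over completions and cannot be localised this way. Second, the certifying sets are functions of the full outcome and are not $\mathcal{F}_{i-1}$-measurable, so they cannot directly dominate the conditional variances. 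Third, and decisively, the inequality $V\lesssim c^2 r X$ cannot hold pathwise: take $X$ to be the number of fixed points of $\pi$ (Lipschitz with $c=2$, certifiable with $r=1$); on the event $X=0$, which has probability about $1/e$, the realised $X$ vanishes while the accumulated conditional variance along the path is bounded away from $0$, so no pathwise domination of $V$ by a multiple of $X$ is possible. Because of this, the subsequent Freedman/Bernstein step (which in any case needs a stopping-time truncation rather than the substitution ``on $\mathcal{E}$ we may take $\sigma^2=\dots$'') has nothing to feed on, and the mechanism that is supposed to produce the $60c\sqrt{r\mathbb{E}[X]}$ correction never materialises. To repair the argument you would either have to bound $V$ in terms of $\mathbb{E}[X]$ (or of $\mathbb{E}[X\mid\mathcal{F}_{i-1}]$, self-bounding style) rather than the realised $X$, which requires a genuinely different combinatorial lemma, or follow the standard Talagrand/McDiarmid convex-distance proof for permutations as in the cited source.
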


\subsection{Proof of \cref{lem:randompartition2}}

\begin{proof}[Proof of \cref{lem:randompartition2}]
    Let us consider a uniformly sampled partition $\mathcal{R}$ of  a subset of $V(G)
    \setminus v$ with exactly $b_c$ parts of size $a_c$ for every $C \le c \le
    4C$. We will show that $\mathcal{R}$ conditional on the \ref{cond:A0},   \ref{cond:A1}, \ref{cond:A2}, \ref{cond:B1}, \ref{cond:B2} and \ref{cond:B3} has the desired spreadness property. First, we show the intersection of the events \ref{cond:A0},   \ref{cond:A1}, \ref{cond:A2}, \ref{cond:B1}, \ref{cond:B2} and \ref{cond:B3} has probability $1-o(1)$. 
    \par We denote by $\mathcal{R}_c := \bigcup_{j=1}^{b_c}
    R_c^j$ the set of all vertices contained in a set of size $a_c$. For $u\in V(G)$ and $R_c^j\in \mathcal{R}$, we say that $(u,R_c^j)$ is a \textit{good pair} if $\delta(G[R_c^j+u])\geq (\delta+2\alpha/3)|R_c^j+u|$, and we say it is a \textit{bad pair} if it is not a good pair. For any $u$ and $R_c^j$, we have that $\mathbb{P}[\text{$(u,R_c^j)$ is a good pair}]\geq 1-e^{-\alpha^2|R_c^j|/10} \geq  1-e^{-\alpha^2C/10}$
    by Lemma~\ref{lem:McDAppl}. 
    \par Consider a fixed $R_c^j \in \mathcal{R}$ and a fixed $d \in [C,4C]$, and let $X_{R_c^j,d}$ be the random variable counting the number of good pairs $(u,R_c^j)$ where $u \in \mathcal{R}_d \setminus R_c^j$. We can naturally view the sets in $\mathcal{R} \setminus \{R_c^j\}$ as being obtained by looking at consecutive intervals of given lengths in a random permutation $\pi$ of $V(G) \setminus R_c^j$. Then, interchanging two elements from $\pi$ affects $X_{R_c^j,d}$ by at most $2$, and for each $s$ if $X_{R_c^j,d}\geq s$, we can certify this with specifying $4Cs$ coordinates of $\pi$ (as each $a_d\geq 4C$). Thus Lemma~\ref{lem:mcdiarmidperm} gives us the following inequality (using that $\E[X_{R_c^j,d}] \geq \left(1 - e^{-\alpha^2C/10}\right)a_db_d$), $$
        \P\left[X_{R_c^j,d} \leq \left(1 - 3e^{-\alpha^2C/10}\right)a_db_d\right] \leq 4e^{-\tfrac{e^{-\alpha^2C/10}}{100}\eta n} = o(n^{-1}).$$
     By taking a union bound on all sets $R_c^j$ in the
     partition $\mathcal{R}$, we obtain, with probability $1 - o(1)$, that for every
     set $R_c^j \in \mathcal{R}$ and for every $d \in [C,4C]$, the number of bad
      pairs $(u,R_c^j)$, with $u \in \mathcal{R}_d$, is at most $3e^{-\alpha^2C/10}b_d$. Summing over all $d$, we see that \ref{cond:A1} is satisfied with probability $1 - o(1)$. A similar reasoning over the random variable that counts the number of good pairs $(u,R_c^j)$ for a fixed $u$ instead of a fixed $R_c^j$, implies that  \ref{cond:A2} is satisfied with probability $1 - o(1)$.
     
     
     For a given set $R_d^k \in \mathcal{R}$ where $R_d^k \subseteq \mathcal{R}_d$, a union bound shows that
         $\P[\forall u \in R_d^k, (R_c^j,u) \text{ is a good pair}] \geq 1 - e^{-\alpha^2C/20}$ ($*$). Define $A'$ to consist of the good sets $R^j_c$. That \ref{cond:B1} and \ref{cond:B2} hold with high probability follows from $(*)$ to compute the corresponding expectation and invoking \ref{lem:mcdiarmidperm} as before.

     Let $E$ denote the conjunctions of all of \ref{cond:A0},   \ref{cond:A1}, \ref{cond:A2}, \ref{cond:B1}, \ref{cond:B2} and \ref{cond:B3}, noting $E$ has probability $1-o(1)$. Consider $s$ distinct vertices $u_1,\ldots,u_s \in V(G)$ and a function $f:\{u_1\ldots,u_s\}\mapsto \mathcal{R}_c^j$,
    \begin{align*}
        \P\left[\bigwedge_{i=1}^s u_i \in f(u_i) \middle| E \right] & \leq \P(E)^{-1} \cdot \P\left[\bigwedge_{i=1}^s u_i \in f(u_i) \right] 
        \leq (1 - o(1)) \left(\frac{4eC}{n}\right)^s \leq \left(\frac{12C}{n}\right)^s,
    \end{align*}
    where the second inequality follows from \cref{lem:perm_spread} for $x_i = u_i$ and $L_i = f(u_i)$. \end{proof}

\end{document}